\documentclass[a4paper,reqno,final]{amsart}

\usepackage{color}

\usepackage[utf8]{inputenc}      
\usepackage{amsfonts}            
\usepackage{esint}               
\usepackage{cite}                
\usepackage{graphicx}
\usepackage{booktabs}            
\usepackage{titlesec}            
\usepackage{caption}
\titleformat{\section}{\normalfont\Large\bfseries}{\thesection}{1em}{}
\titleformat{\subsection}[runin]{\normalfont\bfseries}{\thesubsection.}{0.5em}{}[.]

\numberwithin{table}{section}    
\numberwithin{figure}{section}   
\numberwithin{equation}{section} 

\setlength{\parindent}{0cm}
\setlength{\parskip}{1ex plus 0.5ex minus 0.5ex}
\setlength{\multlinegap}{0cm}


\newcommand{\R}{\mathbb{R}}
\newcommand{\N}{\mathbb{N}}

\renewcommand{\AA}{\mathcal{A}}
\newcommand{\BB}{\mathcal{B}}

\newcommand{\DD}{\mathcal{D}}
\newcommand{\EE}{\mathcal{E}}

\newcommand{\GG}{\mathcal{G}}
\newcommand{\HH}{\mathcal{H}}

\newcommand{\JJ}{\mathcal{J}}

\newcommand{\LL}{\mathcal{L}}

\newcommand{\XX}{\mathcal{X}}
\newcommand{\YY}{\mathcal{Y}}

\newcommand{\dive}{\operatorname{div}}
\newcommand{\be}{\begin{equation}}
\newcommand{\ee}{\end{equation}}
\newcommand{\rot}{\text{rot}}

\newcommand{\beq}{\begin{equation}}
\newcommand{\eeq}{\end{equation}}

\newcommand{\curl}{\operatorname{curl}}

\newcommand{\pgtonew}{\sqrt{(1+\frac{\| p(t)\|^2}{(m_0^q c)^2})} }

\newcommand{\Hc}{\operatorname{H_{\curl}^{\Gamma}}}
\newcommand{\dist}{\operatorname{dist}}

\newcommand{\vektor}[2]{\begin{pmatrix} #1\\#2\end{pmatrix}}

\newcommand{\dual}[2]{\langle #1 , #2 \rangle}

\providecommand{\supp}{\operatorname{supp}}


\newcommand{\ddp}[2]{\frac{\partial #1}{\partial #2}}
\newcommand{\sdual}[2]{\left\langle #1 , #2 \right\rangle}

\newcommand{\embed}{\hookrightarrow}


\usepackage{mathrsfs}

\newtheorem{theorem}{Theorem}[section]
\newtheorem{lemma}[theorem]{Lemma}

\newtheorem{assumption}[theorem]{Assumption}
\newtheorem{remark}[theorem]{Remark}
\newtheorem{definition}[theorem]{Definition}


\begin{document}

\title[Optimal control of ML equations]
{Optimal control of the inhomogeneous relativistic Maxwell Newton Lorentz equations}

\author{C.~Meyer}
\address[C.~Meyer]{TU Dortmund, Faculty of Mathematics, Vogelpothsweg 87, 44227 Dortmund, Germany.}
\email{cmeyer@math.tu-dortmund.de}

\author{ S.~M.~Schnepp}
\address[S.~M.~Schnepp]{Institute of Geophysics, Department of Earth Sciences, ETH Zurich, Sonneggstrasse 5, CH-8092 Zurich, Switzerland.}
\email{schnepps@ethz.ch}

\author{O.~Thoma}
\address[O.~Thoma]{TU Dortmund, Faculty of Mathematics, Vogelpothsweg 87, 44227 Dortmund, Germany.}
\email{othoma@math.tu-dortmund.de}

\maketitle

\begin{abstract}
 This note is concerned with an optimal control problem governed by the relativistic Maxwell-Newton-Lorentz equations, 
 which describes the motion of charges particles in electro-magnetic fields and 
 consists of a hyperbolic PDE system coupled with a nonlinear ODE. An external magnetic field acts as control variable. 
 Additional control constraints are incorporated by introducing a scalar magnetic potential which leads to an additional 
 state equation in form of a very weak elliptic PDE.
 Existence and uniqueness for the state equation is shown and the existence of a global optimal control is established. 
 Moreover, first-order necessary optimality conditions in form of Karush-Kuhn-Tucker conditions are derived. 
 A numerical test illustrates the theoretical findings.
\end{abstract}

\textbf{Key words.} Optimal control, Maxwell's equation, Abraham model, Dirichlet control, state constraints.

\textbf{AMS subject classification.} 49J20, 49J15, 49K20, 49K15, 35Q61

\section{Introduction}

In this paper we discuss an optimal control problem governed by the relativistic Maxwell-Newton-Lorentz equations. 
This system of equations consists of Maxwell's equations, i.e., a hyperbolic PDE system, and a nonlinear ODE. 
It models the relativistic motion of charged particles in electromagnetic fields and is therefore used for the 
simulation of particle accelerators~\cite{Gjonaj2006,Li:dn,Schneidmiller:2007tx,Geddes:2004bi}.
The control variable is an additional exterior magnetic field, which, in practice, could be realized by exterior 
(dipole, quadrupole etc.) magnets surrounding the accelerator tube~\cite{Wille2000,Rossbach1992}. 
The aim of the optimization is to steer the particle beam to a given desired track and/or end-time position.
Beside the Maxwell-Newton-Lorentz system, the optimization problem is subject to several additional constraints. 
First, the particle beam should stay inside the accelerator tube, which is realized by pointwise constraints on the particle 
position and constitutes a pointwise state constraint from a mathematical point of view. 
Moreover, as a stationary magnetic field, the control has to satisfy certain constraints, e.g.\ its divergence has to vanish. 
In order to guarantee these constraints, we introduce a scalar magnetic potential, whose boundary data serve as new control 
variable. This gives rise to a Poisson equation for the exterior magnetic field entering the system of state equations. 
Physically, the new control variable can be interpreted as a surface current on the boundary of the computational domain. 
In this way we obtain a Dirichlet boundary control problem.

Let us put our work into perspective. Optimal control of Maxwell's equations and coupled systems involving 
these have been subject to intensive research in the recent past. We only mention the work of Tr\"oltzsch et al. \cite{DruetKleinSprekelsYouseptTroeltzschYousept2011, YouseptTroeltzsch2012, NicaiseStingelinTroeltzsch2014, NicaiseTroeltzsch2014, NicaiseStingelinTroeltzsch2015} and Yousept \cite{Yousept2010, Yousept2012:1, Yousept2012:2,Yousept2013:1,Yousept2013:2}.
However, most of these contributions deal with stationary or time harmonic Maxwell's equation. 
In  \cite{NicaiseTroeltzsch2014} the so-called evolution Maxwell equation in form of a (degenerate) parabolic PDE is considered. 
In contrast to this, we deal with a first-order hyperbolic system for the electric and the magnetic fields.
Optimal control of magneto-hydrodynamic processes was investigated in \cite{GriesseKunisch2006}. 
These processes are modeled by a coupled system consisting of Maxwell's equation and the Navier-Stokes equations. 
However, \cite{GriesseKunisch2006} also focuses on the stationary case.
Up to our best knowledge, the non-standard coupling of the
(hyperbolic) Maxwell's equation and the ODE for the relativistic motion of charged 
particles have not been treated so far in the context of optimal control, neither from an analytical nor from a numerical point of view. 
The mathematical treatment of the Maxwell-Newton-Lorentz system itself however has been investigated by several authors before. 
Concerning the analysis we mention \cite{Spohn2004,ImaikinKomechMauser2004, BauerDeckertDuerr2013, Falconi2014}
and the references therein. Regarding its numerical treatment we refer to~\cite{Gjonaj2006,Li:dn,Geddes:2004bi}.
The analytical and numerical investigations presented in this paper will partly rely on these findings.
As mentioned before the control constraints on the external magnetic field are realized by introducing a scalar potential 
which leads to a boundary control problem of Dirichlet type. Optimal control problems of this type have been 
intensely investigated in the recent past, see e.g.\ \cite{CasasRay2006, KunischVexler2007, DeckelnickGuntherHinze2009, OFPhanSteinbach2010, MayRannacherVexler2013}. 
We choose $L^2(\Gamma)$ as control space, so that the associated Poisson equation is treated in very weak form, 
which is a well-established procedure, cf.\ e.g.\ \cite{MayRannacherVexler2013}.
Another challenging aspect of the optimal control under consideration are the pointwise state constraints on the particle position. 
Lagrange-multipliers associated with constraints of this type, in general, lack in regularity and are only measures, 
see e.g.\ \cite{Casas1986, Casas1993} for the case of PDEs and \cite{HartlSethiVickson1995} and the references therein for the case of ODEs.
Numerically, such constraints are frequently treated by regularization and relaxation methods, especially in the PDE case, 
cf.\ e.g.\ \cite{HK06, MRT06, Schiela2013}. We also follow this approach and apply an interior point method 
to realize the state constraints.

The paper is organized as follows: in the following section we introduce the physical model, i.e., the Maxwell-Newton-Lorentz system. 
This model is not directly amenable for a mathematically rigorous treatment mainly due to two reasons, which are addressed at the end of 
Section \ref{sec:statphmod}. We therefore slightly modify the model in Section \ref{sec:optcon} by replacing the point charge with a 
distributed volume charge density. In addition the scalar magnetic potential is introduced in this section which allows us to formulate the 
optimal control problem, first in a formal way. After stating our standing assumptions in Section \ref{sec:assu}, Section \ref{sec:rigorous} 
is then devoted to a mathematically sound and rigorous statement of the optimal control problem, including the function spaces for all 
optimization variables as well as the notion of solutions for the differential equations involved in the state system.
We start the analysis of the optimal control problem by discussing the state equation in Section \ref{sec:state}. 
Then we turn to the optimal control problem and show the existence of globally optimal controls in Section \ref{sec:exopt}. 
The analytical part of the paper ends with the derivation of first-order-necessary optimality conditions involving Lagrange multipliers 
in Section \ref{sec:nec}. 
The final Section \ref{sec:diskret} is devoted to the numerical treatment of the optimal control problem. 
After describing the discretization of the state system and the optimization algorithm, we present an exemplary 
numerical result for the end time tracking of a single-particle beam.

\section{Statement of the physical model}\label{sec:statphmod}

In this section we introduce the physical model underlying the optimal control problem. 
The precise mathematical model will be stated in Section \ref{sec:rigorous}.

To keep the discussion concise we will restrict to the motion of only one particle in the accelerator. 
The adaptation of the model to a finite number of particles is straightforward, see Remark \ref{rem:manyparts} below.
Our model is based on the classical inhomogeneous Maxwell's equations with the boundary conditions of a perfect conductor. 
In strong form these equations read:
\begin{subequations}\label{eq:maxwell}
	\begin{align} 
		  &\epsilon \frac{\partial}{\partial t} E(x,t) - \mu^{-1} \curl \ B(x,t) =  j(x,t)&  &\text{in } \Omega \times [0,T] \label{eq:maxwella}\\
		  &\frac{\partial}{\partial t} B(x,t) + \curl E(x,t) = 0  & &\text{in } \Omega \times [0, T] \label{eq:maxwellb}\\
	          & \dive E(x,t) = \frac{1}{\epsilon} \rho(x,t), \quad \dive B(x,t) = 0&  &\text{in } \Omega \times [0,T]\label{eq:maxwellc}\\
	           &E(x,0)=E_0(x), \quad B(x,0)=B_0(x) \label{eq:maxwelld}&  &\text{in } \Omega \\
	            &  E \times n = 0, \quad B \cdot n = 0&  &\text{on } \Gamma \times [0,T].\label{eq:maxwelle}
 	\end{align}
\end{subequations}
Herein, $E$ and $B$ denote the electric and magnetic field, respectively, and $\Omega$ is the domain occupied by the interior of the 
accelerator channel. Its boundary $\partial\Omega$ is denoted by $\Gamma$, and $n$ is the outward unit normal on $\Gamma$.
Moreover, $\epsilon$ is the permittivity of free space, while $\mu$ denotes the permeability, which are assumed to be constant 
in $\Omega$.
Finally, $\rho$ and $j$ denote the charge density and the electric current.

\begin{remark}\label{rem:gauss}
 Provided the conservation of charge holds, 
 the two Gauss laws in \eqref{eq:maxwellc} as well as the boundary condition on $B$
 intrinsically follow from Faraday's and Ampere's laws in \eqref{eq:maxwella} and 
 \eqref{eq:maxwellb} so that \eqref{eq:maxwell} is not overdetermined. 
\end{remark}

In our case, the charge density is generated by a single point charge and therefore given by
\begin{equation}\label{eq:charge}
	\rho(x,t) := q \delta(|x-r(t)|_2) \quad \text{in } \Omega \times [0,T],
\end{equation}
where $q>0$ is the constant particle charge, $r$ denotes the particle position, and $|\,.\,|_2$ is the Euclidean norm of a vector. 
Furthermore, $\delta: \R \to \{0,\infty\}$ is the Dirac delta distribution.
The current $j(x,t)$ arising on the right hand side in \eqref{eq:maxwella} is generated by the motion of the particle
and thus given by
\begin{equation}\label{eq:currentcoupl}
 j(x,t) := -q\delta(|x-r(t)|_2)v(p(t)) \quad \text{in } \Omega \times [0,T],
\end{equation}
where $p$ denotes the relativistic momentum of the particle. Moreover, we set 
\begin{equation}\label{eq:sigma}
 v(p(t)) := (m_0^q \, \gamma(p(t)))^{-1}p(t)
\end{equation}
with the mass at rest $m_0^q$ and the Lorentz factor
\begin{equation*}
 \gamma(p(t)) := \pgtonew,
\end{equation*}
where $c>0$ denotes the speed of light. 
Note that $v(p)$ is nothing else than the velocity of the particle.
It is easily verified that $\rho$ and $j$ chosen in this way satisfy the conservation of charge.

We summarize the constants of the model in Table \ref{tabconstants}.
\begin{table}[h]
\begin{tabular}{lr}
\toprule
Physical constants& Name of quantity\\
\midrule
$c$& speed of light\\
$\epsilon$& permittivity\\
$\mu$ & permeability\\
$m_0^q$& rest mass\\
$q$ & particle charge\\
\bottomrule
\end{tabular}
\addtolength{\abovecaptionskip}{-22pt}
\caption{Overview of arising constants}\label{tabconstants}
\end{table}

In addition to \eqref{eq:sigma} we introduce the abbreviation
\begin{equation}\label{eq:sigmac}
 \beta(p(t)) :=c^{-1}v(p(t)),
\end{equation} 
which prove helpful in the sequel.

The motion of the particle in electromagnetic fields is governed by the relativistic Newton-Lorentz equations given by the formulae 
\begin{subequations}\label{eq:nlorentz}
	\begin{align}
	         & \dot{p}(t) = q\Big[e(r(t))+ E(r(t),t) +\beta(p(t)) \times \Big( b(r(t)) + B(r(t),t) \Big) \Big]   & & \text{in } [0,T] \label{eq:nlorentza}\\
        		&\dot{r}(t) =  v(p(t))  & &\text{in } [0,T]\label{eq:nlorentzb}\\
		& p(0)= p_0 \quad \text{and} \quad r(0) = r_0\label{eq:nlorentzc}
	\end{align}
\end{subequations}
with initial particle position and momentum $p_0, r_0\in \R^3$.
Furthermore, $e$ and $b$ denote the external electric and magnetic fields, respectively. 
These fields are generated by exterior capacitors and magnets in order to steer the particle beam. 
They are assumed to fulfill the homogeneous Maxwell's equations in $\Omega$. 
As we only consider magnets for manipulating the beam, we assume $e$ to equal zero.
Therefore, the external magnetic field $b$ has to satisfy the conditions
\begin{equation}\label{eq:controlcon}
	 \dive\, b = 0 , \ \curl \, b= 0 \ \text{ and } \ \partial_t b = 0 \quad \text{in} \ \Omega .
\end{equation}
This external magnetic field $b$ will serve as control in the following.  

To summarize the overall model reads as follows:
\begin{subequations} \label{eq:physmod}
\begin{align}
 &\epsilon \frac{\partial}{\partial t} E (x,t) -\mu^{-1} \curl B(x,t) =   
 - q\delta(|x-r(t)|_2)v(p(t)) &  &\text{in } \Omega \times [0,T] \label{eq:physmod_a}\\
 &\frac{\partial}{\partial t} B(x,t)+ \curl E(x,t)  = 0 &  &\text{in } \Omega \times [0,T] \\  
 &\dive E(x,t)  = \frac{1}{\epsilon} q\delta(|x-r(t)|_2), \quad  \dive B(x,t) =0 &  &\text{in } \Omega \times [0,T] \label{eq:physmod_c}\\ 
 &\dot p(t)  =q\Big( E(r(t),t)+\beta(p(t))\times \big(b(r(t))+B(r(t),t)\big)\Big)  & & \text{in } [0,T] \label{eq:physmod_d}\\
 &\dot r(t) =  v(p(t))& & \text{in } [0,T]\label{eq:physmod_e}\\
 &E(x,0)=E_0(x), \, B(x,0)=B_0(x), \, r(0)=r_{0}, \, p(0)=p_{0}, & &\text{in}\ \Omega\\
 & E \times n = 0, \quad B \cdot n = 0&  &\text{on } \Gamma \times [0,T].
\end{align}
\end{subequations}
 
\begin{remark}\label{rem:manyparts}
 In case of an entire bunch of $n$ particles the electric current is given by $- \sum_{i=1}^{n}q_i\delta(|x-r_i(t)|_2)v(p_i(t))$, 
 while the charge density becomes $\sum_{i=1}^{n} q_i\delta(|x-r_i(t)|_2)$. 
 The rest of the system remains unchanged, except that we {had} $n$ equations of the form \eqref{eq:physmod_d}, \eqref{eq:physmod_e} 
 for each of the $n$ particles, cf.\ e.g.\ \cite[Section 11]{Spohn2004}. It is therefore straightforward to adapt the analysis presented in the following to 
 the situation of $n$ particles.
\end{remark}

The model equations in \eqref{eq:physmod} feature two critical aspects. 
First, the particle must not leave the computational domain $\Omega$, i.e.\ the interior of the accelerator, since 
otherwise the right hand side in \eqref{eq:physmod_d} is not well defined. 
This issue will be resolved by adding an additional state constraints to the optimal control problem. 
From an application driven point of view this constraint is meaningful, too.
Secondly, the pointwise evaluation of the electric and the magnetic fields precisely at the point $x=r(t)$ in \eqref{eq:physmod_d} 
is, in general, not well defined, since solutions of Maxwell's equations with $j$ given by \eqref{eq:currentcoupl} are singular at this point.
We will overcome {this} difficulty by introducing the so-called Abraham model, which is addressed in the next section.
For further details on the Abraham model, we refer to \cite[Section 2.4]{Spohn2004}.

\section{The optimal control problem}\label{sec:optcon}

This section is devoted to the optimal control problem. Having established the Abraham model, 
we introduce a scalar potential to cope with the additional conditions in the external magnetic field in \eqref{eq:controlcon}.
Then, we state the complete optimal control problem including the objective functional and the additional state constraints 
on the particle position.
The rest of this section is concerned with the standing assumptions and the mathematically rigorous statement of the optimal control problem.

As described above, the pointwise evaluation in \eqref{eq:physmod_d} is, in general, not well defined. 
To resolve this issue, we replace the Dirac delta distribution by a smeared out version. For this purpose 
we fix a function $\varphi:\R^3 \to \R$ such that 
\begin{equation}\label{eq:smearout}
\begin{gathered}
 \varphi \in C^{2,1}(\R^3), \quad \supp(\varphi)\subseteq B_R(0), \quad \varphi(x) \geq 0\quad \forall\,x\in \R^3\\
 \int_{\R^3} \varphi(x)\,dx = 1, \quad \varphi(x) = \varphi(y) \quad \text{if } |x|_2 = |y|_2
\end{gathered}
\end{equation}
(i.e., $\varphi$ is rotationally symmetric).
The pointwise evaluations in \eqref{eq:physmod_d} are then approximated by
\begin{align}
 & E(r(t),t)+\beta(p(t))\times \big(b(r(t))+B(r(t),t)\big)\nonumber\\ 
 &\quad \approx \int_{\Omega} \varphi(x-r(t))\Big[ E(x,t)+  \beta(p(t))\times  \big(b(x) + B(x,t)\big)  \Big] dx.
 \label{eq:rhslorentz}
\end{align}
Accordingly, the charge distribution and the current density are replaced by 
\begin{equation}\label{eq:chargecurrent}
 \rho(x,t) = q\,\varphi(x-r(t)) \quad \text{and}\quad j(x,t) =  - q\,\varphi(x-r(t))v(p(t)).
\end{equation}
One readily verifies that the conservation of charge is also fulfilled by this choice for $\rho$ and $j$.

To incorporate the conditions on the external magnetic field in \eqref{eq:controlcon}, 
we introduce a scalar magnetic potential as solution of the following Poisson's equation with Dirichlet boundary data
\begin{equation}\label{eq:poisson}
 -\Delta \eta = 0 \quad \text{in} \ \Omega,\quad \eta = u \quad  \text{on} \ \Gamma.
\end{equation}
Under the assumption that $\Omega$ is a simply connected domain, the gradient $b:= \nabla \eta$ is a conservative vector field so that
\begin{equation*}
	 	\dive\, b = \dive \, \big( \nabla \eta \big) = \Delta \eta= 0, \quad
		\curl \, b= \curl \big(\nabla \eta \big)= 0, \quad
		\partial_t b = 0,
\end{equation*}
i.e.\ \eqref{eq:controlcon}, is fulfilled almost everywhere. 
The Dirichlet data $u$ in \eqref{eq:poisson} will serve as the new control variable in the following.
Employing \eqref{eq:poisson} and integration by parts, one rewrites the integral involving $b$ in \eqref{eq:rhslorentz} by
\begin{equation}\label{eq:intbypartsode}
\begin{aligned}
 \int_{\Omega} \varphi(x-r(t))\, \beta(p(t)) \times  b(x) dx
 &= - q\int_{\Omega}{\eta \,\nabla \varphi(x-r(t)) \times  \beta(p(t)) \ dx}\\
 &\quad + q\int_{\Gamma}{u\, \varphi(x-r(t))\, \beta(p(t)) \times n\  ds}.
\end{aligned}
\end{equation}
Summing up all components of the physical model, the optimal control problem under consideration reads
\begin{equation}\tag{$\tilde{\textup{P}}$}\label{eq:tildeP}
 \text{minimize} \quad \JJ(r,u):= \int_0^T J_1(r(t))\,dt + J_2(r(T)) + \frac{\alpha}{2} \int_\Gamma u^2\,d\varsigma
\end{equation}
subject to Maxwell's equations
\begin{subequations}\label{eq:maxwellstrong}
\begin{align}
 &\epsilon \,\frac{\partial}{\partial t} E (x,t) -\mu^{-1} \curl B(x,t) =   - q\varphi(x-r(t))v(p(t)) \quad \text{in } \Omega \times [0,T] \\
 &\frac{\partial}{\partial t} B(x,t)+ \curl E(x,t)  = 0 \quad \text{in } \Omega \times [0,T] \\  
 &\dive \ E(x,t)  = \frac{1}{\epsilon} q\varphi(x-r(t)), \quad  \dive \ B(x,t) =0 \quad \text{in } \Omega \times [0,T]\\ 
 &E(x,0)=E_0(x), \, B(x,0)=B_0(x) \quad \text{in}\ \Omega\\
 & E \times n = 0, \quad B \cdot n = 0 \quad \text{on } \Gamma \times [0,T],
\end{align}
\end{subequations}
the relativistic Newton-Lorentz equations
\begin{subequations}\label{eq:nlorentzpInt}
\begin{align}
 \dot{p}(t) &=
 q \int_{\Omega}{\varphi(x-r(t))\Big[ E(x,t)+  \beta(p(t))\times  B(x,t)  \Big] dx} \nonumber\\
 &\quad - q\int_{\Omega}{\eta\, \nabla \varphi(x-r(t)) \times  \beta(p(t)) \ dx} \label{eq:newtonlorentz}\\
 &\quad + q\int_{\Gamma}{u \,\varphi(x-r(t)) \,\beta(p(t)) \times n\  ds}  \qquad \text{in } [0,T] \nonumber\\ 
 \dot r(t) &= v(p(t)) \quad \text{in } [0,T]\\
 r(0)&=r_{0}, \quad p(0)=p_{0},
\end{align}
\end{subequations}
Poisson's equation
\begin{equation}\label{eq:poissonstrong}
 -\Delta \eta = 0 \quad \text{in} \ \Omega,\quad \eta = u \quad  \text{on} \ \Gamma,
\end{equation}
and pointwise state constraints on the particle position
\begin{equation}\label{eq:stateconst}
 r(t) \in \tilde\Omega.
\end{equation}
Herein, $J_1, J_2: \R^3 \to \R$ are given functions which reflect the goal of the optimization to steer the beam 
on the overall time interval and at end time, respectively. 
Moreover, the Tikhonov parameter $\alpha$ is a positive real number. 
Finally, $\tilde\Omega \subset \Omega$ is a closed subdomain fulfilling
\begin{equation*}
 \dist(\tilde{\Omega},\Gamma) > R,
\end{equation*}
where $R$ is the number defining the support of the smeared out delta distribution, cf.\ \eqref{eq:smearout}.

\begin{remark}\label{rem:stateconst}
 Note that now the integrands on the right-hand side of \eqref{eq:newtonlorentz} are well-defined in any case, 
 even if $r(t) \notin \Omega$ for some $t\in [0,T]$. 
 However, in this case, the model becomes physically meaningless.
 In this way the state constraint in \eqref{eq:stateconst} ensures that the model does not loose its physical validity. 
 Moreover, in applications, it is important to keep the particles inside the accelerator tube, which is also reflected by the condition \eqref{eq:stateconst}.
\end{remark}

\subsection{Standing assumptions and notation}\label{sec:assu}

We start by introducing several function spaces which will be useful in the sequel.

\begin{definition}[$H(\curl;\Omega)$-spaces]
 By $X$ we denote the space $X= L^2(\Omega;\R^3)$. 
 For convenience of notation the scalar products and corresponding norms in $X$ and $X\times X$ are both denoted by $(.,.)_X$ 
 and $\|.\|_X$, respectively. Moreover, we set 
 \begin{equation*}
  H(\curl;\Omega) := \{\omega\in X: \curl\omega \in X\},
 \end{equation*}
 where $\curl: X \to \DD'$ denotes the distributional curl-operator. 
 With the obvious scalar product $H(\curl;\Omega)$ becomes a Hilbert space. It is well known that 
 there exists a linear and continuous operator $\tau_n: H(\curl;\Omega) \to H^{-1/2}(\Gamma;\R^3)$ such that 
 $\tau_n \omega = \omega \times n$ for all $\omega \in H(\curl;\Omega) \cap C(\bar\Omega;\R^3)$, 
 see e.g.\ \cite[Chapter 2]{GiraultRaviart1986}.
 In the sequel we will denote $\tau_n\omega$ by $\omega \times n$ for all $\omega \in H(\curl;\Omega)$ for simplicity 
 and call this operator tangential trace.
 For a detailed discussion of the tangential trace we refer to \cite{Alonso1996}.
 Furthermore, we define the set
 \begin{equation*}
  \Hc := \left\{ V = (V_1, V_2) \in H(\curl;\Omega) \times H(\curl;\Omega): V_1 \times n=0\right\}.
 \end{equation*}
 As a closed subspace of a Hilbert space, $\Hc$ is a Hilbert space itself.
\end{definition}

\begin{definition} [$H(\dive;\Omega)$-spaces]
We define the set
\begin{equation*}
 H(\dive ;\Omega) := \left\{\omega \in X: \ \dive \omega \in L^2(\Omega)  \right\},
\end{equation*}
where $\dive: X \to \DD'$ denotes the distributional divergence. Equipped with the obvious scalar product,
$H(\dive ;\Omega)$ becomes a Hilbert space.
Functions in $H(\dive ;\Omega)$ admit a normal trace, i.e., there is a linear and continuous operator 
$\gamma_n: H(\dive ;\Omega) \to H^{-1/2}(\Gamma)$ such that $\gamma_n \omega = \omega\cdot n$ for all 
$\omega \in H(\dive ;\Omega) \cap C(\bar\Omega;\R^3)$, see e.g.\ \cite[Theorem 1.2]{Temam1977}. 
As above, we denote the normal trace by $\omega\cdot n$ for all $\omega$ in $H(\dive ;\Omega)$.
Furthermore, we define the set 
\begin{equation*}
 \HH :=\left\{ v \in H^1_0(\Omega): \ \nabla v \in H (\dive \ ;\Omega) ,\;\partial_n v \in L^2(\Gamma)\right\},
\end{equation*}
where we set $\partial_n v := n\cdot \nabla v$. Endowed with the norm 
$$\|v\|_{\HH}= (\big\|v\|_{H^1(\Omega)}^2 + \| \Delta v\|_{L^2(\Omega)}^2 
+ \| \partial_n v \|_{L^2(\Gamma)}^2\big)^{\frac{1}{2}}$$
and the corresponding scalar product, it is a Hilbert space, too. 
Here and in the following, $\Delta := \dive\nabla : \HH \to L^2(\Omega)$ denotes the Laplacian.
\end{definition}

Now we are in the position to state the assumptions on the domain $\Omega$.

\begin{assumption}[Regularity of the domain]\label{assu:domain}\ 
\begin{enumerate}
 \item The domain $\Omega \subset \R^3$ is open, bounded, and simply connected. 
 \item The subdomain $\tilde\Omega$ can be represented by
	\begin{equation*}
		\tilde\Omega = \left\{ x \in\R^3 \ : \ g_i(x) \leq 0, \ i= 1,...,m \right\} 
	\end{equation*}
	where $m \in \N$ and $g_i \in C^1(\R^3,\R)$ with absolutely continuous derivatives $g_i'$.
 \item Furthermore, $\Omega$ is such that for all $g \in L^2(\Omega)$ there exists a unique solution $w \in \HH$ of 
 \begin{equation}\label{eq:poissonweak}
  \int_{\Omega}  \nabla w \cdot \nabla v \, dx =  \int_{\Omega} g \, v \, dx	\quad \forall \, v \in  H^1_0(\Omega)
 \end{equation}
 and the following a priori estimate 
 $$\|w\|_{\HH} \leq C\, \| g\|_{L^2(\Omega)}$$
 is fulfilled with a constant $C>0$ independent of $g$ and $w$.
\end{enumerate}
\end{assumption}

\begin{remark}
 By the Lax-Milgram Lemma \eqref{eq:poissonweak} admits a unique solution in $w\in H^1_0(\Omega)$ and, due to $g\in L^2(\Omega)$, 
 it immediately follows that $\nabla w \in H (\dive \ ;\Omega)$. The additional condition $\partial_n w \in L^2(\Gamma)$ 
 is satisfied under rather mild assumptions on the boundary of $\Omega$, cf.\ \cite[Chapter 6]{Dauge1988}.
\end{remark}

\begin{assumption}[Problem data]\label{assu:further}
 We {assume} the following assumptions on the data in \eqref{eq:optconexact}:
 \begin{itemize}
  \item $r_0\in \tilde\Omega$.
  \item The first two contributions to the objective fulfill $J_1, J_2 \in C^1(\R^3)$. 
   Furthermore, we assume that $J_1$ and $J_2$ are bounded from below by constants 
  $\underline{c_1}> -\infty$ and $\underline{c_2}> -\infty$.
  \item The Tikhonov regularization parameter satisfies $\alpha\in \R$, $\alpha > 0$.
  \item The smeared out delta distribution $\varphi$ fulfills the assumptions in \eqref{eq:smearout}.
  \item $\epsilon$, $\mu$, $q$ are positive constants.
  \item $E_0, B_0 \in X$.
  \item $g_1, ..., g_m \in C^1(\R^3)$.
 \end{itemize}
\end{assumption}

Given a linear normed space $\XX$ we denote by $C_{\{0\}}([0,T];\XX)$ the space of functions from $C([0,T];\XX)$ which vanish at 
$t=0$. The space $C_{\{0\}}^1([0,T];\XX)$ is defined analogously. By
\begin{equation*}
 	Y := \{ (r,p) \in H^1(]0,T[;\R^3)^2 : r(0) = p(0) = 0\}, \quad
 	Z := L^2(]0,T[;\R^3)^2
\end{equation*}
we denote the state space, which comes into play in Section \ref{sec:nec}.
To keep the notation concise, we also denote the space $\{ r\in H^1(]0,T[;\R^3) : r(0) = 0 \}$ by $Y$.
In addition, the Jacobian of the electric current $j$ as given in \eqref{eq:chargecurrent} is denoted by
\begin{equation}\label{eq:jprime}
 j'(r,p) := \big(\partial_r j(r,p), \partial_p j (r,p)\big)
 =  
 \begin{pmatrix}
 	\partial_r j_1(r,p) & \partial_p j_1(r,p)\\
 	\partial_r j_2(r,p) & \partial_p j_2(r,p).  
 \end{pmatrix}
\end{equation}
If $\XX$ and $\YY$ are linear normed spaces, we write $\LL(\XX,\YY)$ for the space of linear and bounded operators from $\XX$ to $\YY$.
Furthermore, $|v|_2$ is the Euclidean norm of a vector $v\in \R^3$. Abusing the notation slightly, we 
denote the Euclidean norm on $\R^3\times \R^3$ by the same symbol, i.e., 
$|(v,w)|_2 := \sqrt{|v|_2^2 + |w|_2^2}$ for $v,w\in \R^3$.
If $A\in \R^{3\times 3}$, then $|A|_F$ denotes the Frobenius norm of $A$. Finally, throughout the paper, $C$ is a generic constant. 

\subsection{Mathematically rigorous formulation of the optimal control problem}\label{sec:rigorous}

In the following we define a rigorous notion of solutions to the system of state equations in
\eqref{eq:maxwellstrong}--\eqref{eq:poissonstrong}. We start with Maxwell's equation and define 
the linear and unbounded operator
\begin{equation*}
	\mathcal A: X \times X \rightarrow X \times X, \quad 
	\mathcal A := \left(\begin{matrix}
	0 & -\curl\\
	\curl & 0\\
    \end{matrix} \right)
\end{equation*}
with its domain of definition $D(\AA) = \Hc$. In view of Remark \ref{rem:gauss}, Maxwell's equation 
can then be reformulated by the following \emph{Cauchy-Problem}:
  	\begin{equation}\label{eq:cauchymax}
 		 \left.
  		 \begin{aligned}
    		 	\frac{\partial}{\partial t}\vektor{E(t)}{B(t)} + \AA \vektor{E(t)}{B(t)} &= j \quad  \text{a.e.\ in} \ [0,T] \\
     		\vektor{E(0)}{B(0)} &= \vektor{E_0}{B_0}\\
   		\end{aligned}
   		 \quad \right\}
  	\end{equation}
As shown in \cite[Chapter XVII.B., Section 4]{Dautray1992:1} and \cite[Chapter IX, Section 3]{Dautray1992:2}, 
$-i\AA$ is self-adjoint, i.e., $-i \mathcal A= i \mathcal A^*= -(i \mathcal A)^*$, 
and consequently the theorem of Stone {states} that $\AA$ is the infinitesimal generator of 
a $C_0$-semigroup, see \cite{Pazy1983}. We denote this semigroup and its two components by
\begin{equation}\label{eq:GGdef}
 \GG(t): X\times X \to X\times X,\quad \GG(t) := \vektor{\EE(t)}{\BB(t)}.
\end{equation}
As $\GG$ is strongly continuous, the following notion of solutions to \eqref{eq:cauchymax} is meaningful:

\begin{definition}[Mild solution of Maxwell's equations]\label{def:mildsol}
 Let $(E_0, B_0)  \in X \times X$ and $j \in L^1([0,T];X)^2$ be given. Then we call $(E, B) \in C([0,T]; X)^2$, 
 given by
 \begin{equation}\label{eq:mildsol}
  \vektor{E(t)}{B(t)} = \GG(t) \vektor{E_0}{B_0}+ \int_0^t{\GG(t-\tau)j(r,p)(\tau) \ d \tau} \quad 0 \leq t \leq T,
 \end{equation}
 \emph{mild solution}  of the Cauchy problem \eqref{eq:cauchymax} on $[0,T]$.
\end{definition}

Note that the strong continuity of $\GG$ implies that 
the right-hand side in \eqref{eq:mildsol} indeed defines an element of $C([0,T]; X)^2$.
Moreover, by strong continuity, there are constants $M\geq 1$ and $\omega \geq 0$ such that 
\begin{equation}\label{eq:semiexp}
 \|\GG(t)\|_{\LL(X\times X,X\times X)} \leq M e^{\omega\,t}\quad \forall\, t\in [0,T]
\end{equation}
giving in turn the following a priori estimate
\begin{equation}\label{eq:apriories}
 \|(E,B)\|_{C([0,T];X \times X)} \leq 2\,Me^{\omega\,T} \big( || (E_0,B_0)\|_{X \times X} + \| j \|_{L^1([0,T]; X \times X)} \big).
\end{equation}

Next we turn to the Poisson equation \eqref{eq:poissonstrong}. As the Dirichlet data are given by the control function $u\in L^2(\Gamma)$, 
we employ the following notion of solutions:

\begin{definition}[Very weak solution of Poisson equation]\label{def:vweak}
 For given $u \in L^2(\Gamma)$ we call $\eta \in L^2(\Omega)$ \emph{very weak solution} of \eqref{eq:poissonstrong}, 
 if it solves the very weak formulation 
 \begin{equation}\label{eq:vweakpoisson}
  -\int_{\Omega}{\eta \Delta v \ dx} + \int_{\Gamma}{u \, \partial_n v \ d \varsigma} = 0 \quad \forall \,  v \in \mathcal H.
 \end{equation}
\end{definition}

\begin{lemma}\label{le:existencePos}
 For every $u\in L^2(\Gamma)$ there exists a unique solution $\eta\in L^2(\Omega)$ of \eqref{eq:vweakpoisson} satisfying an 
 a priori estimate
 \begin{equation*}
  \|\eta\|_{L^2(\Omega)} \leq C\, \|u\|_{L^2(\Gamma)}
 \end{equation*}
 with a constant $C>0$ independent of $u$ and $\eta$.
\end{lemma}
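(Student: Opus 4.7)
The plan is to use the transposition method (Lions--Magenes): since the boundary data only lives in $L^2(\Gamma)$, we cannot work in $H^1(\Omega)$ directly, but we can represent $\eta$ as a functional through its pairing with arbitrary $L^2(\Omega)$ right-hand sides of the adjoint (or rather, the homogeneous Dirichlet) Poisson problem. The space $\mathcal{H}$ has been designed precisely so that $\Delta$ maps $\mathcal{H}$ onto $L^2(\Omega)$ bijectively, with continuous inverse and with $\partial_n v \in L^2(\Gamma)$ available as a continuous trace --- this is exactly the content of Assumption~\ref{assu:domain}(3).

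Existence: For every $f\in L^2(\Omega)$, Assumption~\ref{assu:domain}(3) provides a unique $w_f\in\mathcal H$ with $-\Delta w_f = f$ weakly and $\|w_f\|_{\mathcal H}\le C\|f\|_{L^2(\Omega)}$; in particular $\|\partial_n w_f\|_{L^2(\Gamma)}\le C\|f\|_{L^2(\Omega)}$. I then define
\begin{equation*}
 L: L^2(\Omega)\to\mathbb R, \qquad L(f) := -\int_{\Gamma} u\,\partial_n w_f \, d\varsigma,
\end{equation*}
which is linear in $f$ (by the linearity of $f\mapsto w_f$) and satisfies $|L(f)|\le \|u\|_{L^2(\Gamma)}\|\partial_n w_f\|_{L^2(\Gamma)}\le C\|u\|_{L^2(\Gamma)}\|f\|_{L^2(\Omega)}$. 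The Riesz representation theorem then yields a unique $\eta\in L^2(\Omega)$ with $L(f) = \int_\Omega \eta f\,dx$ and $\|\eta\|_{L^2(\Omega)}\le C\|u\|_{L^2(\Gamma)}$. To check \eqref{eq:vweakpoisson}, pick an arbitrary $v\in\mathcal H$ and set $f := -\Delta v\in L^2(\Omega)$; by uniqueness in Assumption~\ref{assu:domain}(3), $w_f=v$, so
\begin{equation*}
 -\int_\Omega \eta\,\Delta v\,dx = \int_\Omega \eta f\,dx = L(f) = -\int_\Gamma u\,\partial_n v\,d\varsigma,
\end{equation*}
which is precisely \eqref{eq:vweakpoisson}.

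Uniqueness: If $\eta_1,\eta_2$ both satisfy \eqref{eq:vweakpoisson} for the same $u$, their difference $\eta := \eta_1-\eta_2$ satisfies $\int_\Omega \eta\,\Delta v\,dx = 0$ for every $v\in\mathcal H$. Given any $f\in L^2(\Omega)$, applying this with $v=w_f\in\mathcal H$ from above yields $\int_\Omega \eta f\,dx = -\int_\Omega \eta\,\Delta w_f\,dx = 0$. As $f$ ranges over $L^2(\Omega)$, this forces $\eta=0$.

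The only potentially subtle step is checking that the boundary integral $\int_\Gamma u\,\partial_n w_f\,d\varsigma$ is well-defined as an $L^2(\Gamma)$-pairing; but this is built into the definition of $\mathcal H$ (which explicitly requires $\partial_n v\in L^2(\Gamma)$) together with the a priori estimate from Assumption~\ref{assu:domain}(3). After that, the argument is a textbook application of Riesz representation, and the a priori bound on $\eta$ follows directly from the operator norm of $L$.
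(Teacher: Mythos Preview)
Your proof is correct and is essentially the same argument as the paper's, just unpacked more explicitly: the paper phrases it abstractly by noting that Assumption~\ref{assu:domain}(3) makes $-\Delta:\mathcal H\to L^2(\Omega)$ an isomorphism, so that $(-\Delta^*)^{-1}\in\mathcal L(\mathcal H^*,L^2(\Omega))$, and then sets $\eta=(-\Delta^*)^{-1}Ru$ with $R:L^2(\Gamma)\to\mathcal H^*$ defined by $\langle Ru,v\rangle=-\int_\Gamma u\,\partial_n v\,d\varsigma$. Your construction via the functional $L(f)=-\int_\Gamma u\,\partial_n w_f\,d\varsigma$ and Riesz representation is precisely the same transposition argument written out by hand, since $L(f)=\langle Ru,(-\Delta)^{-1}f\rangle_{\mathcal H^*,\mathcal H}$.
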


\begin{proof}
 Assumption \ref{assu:domain} and the open mapping theorem yield that $-\Delta^{-1} \in \mathcal L(L^2(\Omega), \mathcal H)$
 and consequently $(-\Delta^{*})^{-1} \in \mathcal L(\mathcal H^*,L^2(\Omega))$.
 Moreover, by definition of $\HH$, the mapping 
 \begin{equation*}
  R: L^2(\Gamma) \to \HH^*,\quad \dual{R u}{v}_{\HH^*,\HH} := -\int_\Gamma u\,\partial_n v\,d\varsigma, \; 
  u \in L^2(\Gamma),\,v\in \HH 
 \end{equation*}
 is linear and continuous. Therefore, 
 \begin{equation}\label{eq:laplaceinvers}
  \eta = (-\Delta^*)^{-1} R u
 \end{equation}
 is the unique solution of \eqref{eq:vweakpoisson}. This immediately implies the a priori estimate 
 with $C = \|(-\Delta)^{-1}\|_{\LL(L^2(\Omega),\HH)} \|R\|_{\LL(L^2(\Gamma),\HH^*)}$.
\end{proof}

\begin{remark}
 The low regularity of the very weak solution implies that 
 the external magnetic field $b = \nabla \eta$ is in general only a distribution and no proper function.
 Note however that, thanks to integration by parts in \eqref{eq:intbypartsode}, only $\eta$ and $u$ appear 
 on the right hand side of \eqref{eq:newtonlorentz}.
\end{remark}

\begin{remark}
 We point out that the magnetic field $b = \nabla \eta$ can be extended outside of $\Omega$ in a divergence-free manner.
 The boundary data $u$, i.e., the control function, can physically be interpreted as a surface current density on $\Gamma$. 
 Naturally, one can, in general, not realize such current density in $L^2(\Gamma)$ in practice so that the numerical results 
 presented in Section \ref{sec:numerics} are rather of theoretical interest.
\end{remark}

Based on the above findings, in particular \eqref{eq:mildsol} and \eqref{eq:laplaceinvers}, we can eliminate 
$E$, $B$, and $\eta$ from the state system to obtain a system of equations in $r$, $p$, and $u$ only.
This gives rise to the following definition:

\begin{definition}[Solution of state system]\label{def:compstate}
 Let the mappings $j$, $F_L$, and $e$ be defined as follows:\\
 1.\ Current density:
  \begin{equation*}
   j: C([0,T];\R^3)^2 \to C([0,T];X)^2,\quad j(r,p)(x,t) := \vektor{- q\,\varphi(x-r(t))v(p(t))}{0},
  \end{equation*}
  2.\ Lorentz force:
  \begin{equation*}
  \begin{aligned}
   &F_L: C([0,T];\R^3)^2 \to C([0,T];X)\\
   &F_L(r,p)(x,t)
   \begin{aligned}[t]
    &:= E(x,t) + \beta(p(t)) \times  B(x,t)\\
    &= \EE(t)\vektor{E_0}{B_0} + \int_0^t \EE(t-\tau) j(r,p)(\tau)\,d\tau\\
    &\quad + \beta(p(t)) \times  \Bigg(\BB(t)\vektor{E_0}{B_0} + \int_0^t \BB(t-\tau) j(r,p)(\tau)\,d\tau\Bigg),
   \end{aligned}
  \end{aligned}
  \end{equation*}
  with the components $\EE$ and $\BB$ of the semigroup $\GG$, see \eqref{eq:GGdef}\\[1mm]
  3.\ State system operator:
  \begin{equation*}
  \begin{aligned}
   & e: C_{\{0\}}^1([0,T]; \R^3)^2 \times L^2(\Gamma) \to C([0,T];\R^3)^2, \quad e(w,z,u):=\vektor{ e_1(w,z,u)}{ e_2(w,z,u)},\\
	&\begin{aligned}
 		e_1(w,z,u)(t) &:= 
 		\begin{aligned}[t]
  			\dot{z}(t) &- q \int_{\Omega} \varphi(x-w(t)-r_0) F_L(w+r_0,z+p_0)(t)\,dx\\
   			&+ q\int_{\Omega} \big((-\Delta^*)^{-1}R u\big) \Big[\nabla\varphi(x-w(t)-r_0) \times\beta(z(t)+p_0)\Big] dx\\ 
  			&- q\int_{\Gamma} u \, \varphi(x-w(t)-r_0) \,\beta(z(t)+p_0)  \times n \, d\varsigma \\ 
 		\end{aligned}\\
 		e_2(w,z,u)(t) &:= \dot{w}(t) - v(z(t)+p_0).
	\end{aligned}
  \end{aligned}
  \end{equation*}
  Then we call a triple $(w,z,u) \in C_{\{0\}}^1([0,T]; \R^3)^2 \times L^2(\Gamma)$ solution of the state system, if it satisfies 
  $e(w,z,u) = 0$.
\end{definition}

We point out that, due to the smoothness assumptions on $\varphi$ in \eqref{eq:smearout} and 
the regularity of the mild solution, see Definition \eqref{def:mildsol}, the mappings $j$, $F_L$, and $e$ indeed possess the 
asserted mapping properties. Note that both PDEs, i.e., Maxwell's equations as well as the Poisson equation, are 
incorporated into this notion of solution by means of the solution operators of the respective PDE in form of 
\eqref{eq:mildsol} and \eqref{eq:laplaceinvers}. Therefore we call the equation $e(w,z,u)=0$ \emph{reduced (state) system}, 
as it only involves the variables $w$, $z$, and $u$. 

With this notion of solution to the state system at hand, we are now in the position to 
state a mathematically rigorous version of the optimal control problem under consideration:
\begin{equation}\label{eq:optconexact}\tag{P}
 \left.
	\begin{aligned}
		\min \quad &\JJ(w+r_0,u)\\
		\text{s.t.}	\quad & w,z \in C_{\{0\}}^1([0,T];\R^3), \ u \in L^2(\Gamma)\\ 
		\quad &e(w,z,u)(t)=0\quad \forall\, t\in [0,T]\\
		&g_i(w(t) + r_0) \leq 0, \quad i = 1, ..., m,\quad \forall\, t\in [0,T].
	\end{aligned}
 \qquad \right\}
\end{equation} 

For the sake of clarity we recall all variables and their meaning in Table \ref{tabvariables}. 
Here and in all what follows, we denote the couple $(w,z)$ by $y$.
For completeness we also list the adjoint variables arising in the upcoming sections in this table.
\begin{table}[h]
\begin{center}
\begin{tabular}{lr}
\toprule
Variable& Name of quantity\\
\midrule
\underline{State variables}& \\
$E$& electric field\\
$B$& magnetic field\\
$r$& position of particle\\
$p$& relativistic momentum of particle\\
$w$& normalized particle position\\
$z$& normalized momentum\\
$y:=(w,z)$&\\
$\eta$ &solution of Poisson equation\\
\underline{Control variable}& \\
$u$ &boundary data of Poisson equation\\
\underline{Adjoint variables}& \\
$\Phi$ & adjoint electric field\\
$\Psi$ & adjoint magnetic field\\
$\varrho$ & adjoint particle position\\
$\pi$ & adjoint relativistic momentum\\
$\omega:= (\varrho,\pi)$& \\
$\chi$ & adjoint Poisson solution\\
$\mu$ & Lagrange multiplier\\
\underline{Further variables}& \\
$j$& electric current\\
$F_L$& Lorentz force\\
$\rho$ &charge density\\
$\gamma$ &Lorentz factor\\
$b$ &external magnetic field\\
$e$ &external electric field\\
$\varphi$ &smeared out delta distribution\\
\bottomrule
\end{tabular}
\captionsetup{font=small} 
 \captionsetup[figure]{labelfont=small}  
\caption{Overview of arising variables}\label{tabvariables}
\end{center}
\end{table}

\section{Analysis of the state equation}\label{sec:state}

We begin the discussion of \eqref{eq:optconexact} with an existence and uniqueness result for the reduced state system.
To be more precise, we prove that, for every $u\in L^2(\Gamma)$, there exists a unique $y\in C_{\{0\}}^1([0,T];\R^3)^2$ 
such that $e(y,u) = 0$. The proof is classical and based on Banach's contraction principle. 
It follows the lines of \cite{SpohnKomech2000} and \cite[Section 2.4]{Spohn2004}, 
where existence and uniqueness is shown for the Abraham model for the case $\Omega = \R^3$ and
without the Poisson equation for the external magnetic field.
Let $u\in L^2(\Gamma)$ be fix but arbitrary. The constraint $e(y,u)=0$ in \eqref{eq:optconexact} is equivalent to
\begin{equation}\label{eq:nlorentzsred}
 \dot y(t) = f(y,u)(t) \quad \forall \ t \in [0,T], \quad y(0) = 0,
\end{equation}
where $f = (f_1, f_2): C([0,T];\R^3)^2 \to C([0,T];\R^3)^2$ is given by
\begin{equation*}
\begin{aligned}
 f_1(w,z,u)(t) &:= 
 \begin{aligned}[t]
  & q \int_{\Omega} \varphi(x-w(t)-r_0) F_L(w+r_0,z+p_0)(t)\,dx\\
  & - q\int_{\Omega} \big((-\Delta^*)^{-1}R u\big) \Big[\nabla\varphi(x-w(t)-r_0) \times\beta(z(t)+p_0)\Big] dx\\ 
  & + q\int_{\Gamma} u \, \varphi(x-w(t)-r_0) \,\beta(z(t)+p_0)  \times n \, d\varsigma 
 \end{aligned}\\
 f_2(w,z,u)(t) &:= v(z(t)+p_0).
\end{aligned}
\end{equation*}
For the rest of this section we suppressed the dependency of $f$ on $u$, as $u$ is fixed throughout this section.
In order to apply the Banach's fixed point theorem, we prove the following

\begin{lemma}\label{lm:Lipred}
 The right hand side  in the reduced system \eqref{eq:nlorentzsred} is globally Lipschitz continuous with respect to $y$ 
 in the following sense
 \begin{equation}\label{eq:lip}
  | f(y_1)(t)-f(y_2)(t)|_2 \leq L\, \|y_1-y_2\|_{C([0,t];\R^3)^2} \quad \forall \,t \in [0,T]
 \end{equation}
 with Lipschitz constant $L \geq 0$.
\end{lemma}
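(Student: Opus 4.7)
The plan is to decompose $f=(f_1,f_2)$ and estimate the contribution of $f_2$ together with the three summands in $f_1$ separately, combining four ingredients: (i) the boundedness and global Lipschitz continuity of $v$ and $\beta=c^{-1}v$, which follow from $|v(p)|_2<c$ and the uniform bound on $v'$ (note $\gamma(p)\geq 1$); (ii) the fact that $\varphi$ and $\nabla\varphi$ are globally Lipschitz in $\R^3$ with respect to translations, so that $\|\varphi(\cdot-a)-\varphi(\cdot-b)\|_{L^2(\R^3)}\leq C|a-b|_2$ uniformly in $a,b\in\R^3$ (and analogously for $\nabla\varphi$), which is a consequence of $\varphi\in C^{2,1}(\R^3)$ together with $\supp\varphi\subset B_R(0)$; (iii) the a priori bound $\|(-\Delta^*)^{-1}Ru\|_{L^2(\Omega)}\leq C\|u\|_{L^2(\Gamma)}$ from Lemma \ref{le:existencePos}; and (iv) the semigroup growth estimate \eqref{eq:semiexp} applied to the mild solution representation \eqref{eq:mildsol}.

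The estimates for $f_2$ and for the second and third summands in $f_1$ are routine. For $f_2$, global Lipschitz continuity of $v$ immediately yields $|f_2(y_1)(t)-f_2(y_2)(t)|_2\leq L_v\,|z_1(t)-z_2(t)|_2$. For the second summand in $f_1$, I split the integrand as the sum of $[\nabla\varphi(\cdot-w_1(t)-r_0)-\nabla\varphi(\cdot-w_2(t)-r_0)]\times\beta(z_1(t)+p_0)$ and $\nabla\varphi(\cdot-w_2(t)-r_0)\times[\beta(z_1(t)+p_0)-\beta(z_2(t)+p_0)]$, apply Cauchy--Schwarz in $L^2(\Omega)$ with $\eta=(-\Delta^*)^{-1}Ru$, and exploit (i)--(iii); the result is an estimate of the form $C\|u\|_{L^2(\Gamma)}|y_1(t)-y_2(t)|_2$. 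The boundary term is handled by the same kind of splitting, Cauchy--Schwarz in $L^2(\Gamma)$, and the $L^2(\Gamma)$-Lipschitz estimate for $x\mapsto\varphi(x-a)$ (which is again uniform in $a$ because $\Gamma$ is bounded and $\varphi$ has compact support).

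The main obstacle is the first summand in $f_1$, which involves the Lorentz force $F_L(w+r_0,z+p_0)(t)$ whose electric and magnetic components depend, through the mild solution \eqref{eq:mildsol}, on the entire history $y\vert_{[0,t]}$. The crucial observation is that the current density $j(y)(x,\tau)=-q\,\varphi(x-w(\tau)-r_0)\,v(z(\tau)+p_0)$ is globally Lipschitz in $y$ uniformly in $\tau$, i.e. $\|j(y_1)(\tau)-j(y_2)(\tau)\|_X\leq C|y_1(\tau)-y_2(\tau)|_2$; this follows from a triangle-inequality split combined with (i) and (ii). Plugging this into \eqref{eq:mildsol} and invoking \eqref{eq:semiexp} produces the Volterra-type bound
\begin{equation*}
 \left\|\vektor{E_1(t)-E_2(t)}{B_1(t)-B_2(t)}\right\|_{X\times X}
 \leq Me^{\omega T}\int_0^t C\,|y_1(\tau)-y_2(\tau)|_2\,d\tau
 \leq CMTe^{\omega T}\|y_1-y_2\|_{C([0,t];\R^3)^2}.
\end{equation*}
Writing $F_L(y_1)-F_L(y_2)=(E_1-E_2)+\beta(z_1+p_0)\times(B_1-B_2)+[\beta(z_1+p_0)-\beta(z_2+p_0)]\times B_2$ and using boundedness of $\beta$, the Lipschitz continuity of $\beta$, and the uniform $X$-bound on $B_2(t)$ provided by \eqref{eq:apriories}, I obtain the desired Lipschitz estimate for $F_L$. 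A final splitting of the first summand into $\bigl[\varphi(\cdot-w_1(t)-r_0)-\varphi(\cdot-w_2(t)-r_0)\bigr]\,F_L(y_1)(t)$ and $\varphi(\cdot-w_2(t)-r_0)\,\bigl[F_L(y_1)(t)-F_L(y_2)(t)\bigr]$, combined with the uniform $X$-bound on $F_L(y_1)(t)$ coming from \eqref{eq:apriories}, gives the Lipschitz estimate for the first summand. Collecting all contributions yields \eqref{eq:lip} with an explicit, albeit involved, constant $L$ depending on $u$, $T$, $M$, $\omega$, the initial data $(E_0,B_0,r_0,p_0)$, and the physical constants.
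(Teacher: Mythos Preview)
Your proposal is correct and follows essentially the same approach as the paper: both rely on the uniform bounds and Lipschitz continuity of $v$ (hence $\beta$), the $L^2$-Lipschitz continuity of translated copies of $\varphi$, the semigroup growth bound \eqref{eq:semiexp} applied to the mild solution formula to control the field difference via the Lipschitz estimate for $j$, and a triangle-inequality splitting of each summand. The paper carries out the estimate in detail only for the electric-field contribution to the Lorentz term and leaves the remaining pieces to analogous arguments, whereas you outline all three summands of $f_1$ explicitly; this is a difference in presentation, not in substance, and your observation that the resulting constant depends on $u$ matches Remark~\ref{rem:lip}.
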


\begin{proof}
 First observe that, by definition of $v$ in \eqref{eq:sigma}, we have
 \begin{equation}\label{eq:sigmaest}
  |v(p)|_2 \leq c, \quad |v'(p)|_F \leq \frac{\sqrt 3}{m_0^q} \quad \forall\, p\in \R^3.
 \end{equation}
 Moreover, \eqref{eq:smearout} implies
 \begin{equation}\label{eq:phiest}
 \begin{aligned}
  \|\varphi(.-r_1)-\varphi(.-r_2)\|_{L^2(\Omega)} &\leq {\sqrt{\frac{4}{3}\,\pi R^3}}\,L_\varphi\,|r_1 - r_2|_2 \quad\forall\, r_1, r_2\in\R^3\\
  \|\varphi(.-r_2)\|_{L^2(\Omega)} &\leq \sqrt{\|\varphi\|_{L^\infty(\R^3)}\|\varphi\|_{L^1(\R^3)}} = \sqrt{C_\varphi} \quad\forall\, r_2\in \R^3,
 \end{aligned}
 \end{equation}
 where $L_\varphi > 0$ denotes the Lipschitz constant of $\varphi$ and $C_\varphi := \max_{x\in \R^3} |\varphi(x)|$. 
 Note that $\varphi$ is globally Lipschitz since it is continuously differentiable and has bounded support.

 The assertion for $f_2$ follows from
 \begin{equation*}
 \begin{aligned}
  & |f_2(y_1)(t) - f_2(y_2)(t)|_2\\
  & = |v(z_1(t) + p_0) - v(z_2(t) + p_0)|_2\\
  & \leq |v'(z_2(t) + p_0 + s(z_1(t)-z_2))|_F |y_2(t) - y_1(t)|_2 \leq \frac{\sqrt 3}{m_0^q}\, \|y_1-y_2\|_{C([0,t];\R^3)^2}.
 \end{aligned}
 \end{equation*}
 To verify the global Lipschitz continuity of $f_1$, we exemplary consider 
 \begin{equation*}
  \hat f(y)(t) := q \int_{\Omega}\varphi(x-w(t)-r_0) \Bigg[\EE(t)\vektor{E_0}{B_0} + 
  \int_0^t \EE(t-\tau) j(w+r_0,z+p_0)(\tau)\,d\tau \Bigg]dx,
 \end{equation*}
 which is one of the terms that arise, if one inserts the definition of $F_L$ into $f_1$. 
 Now let $t\in [0,T]$ and $y_1 = (w_1, z_1), y_2 = (w_2, z_2) \in C([0,t];\R^3)^2$ be arbitrary.
 Using the abbreviations $r_i = w_i + r_0$ and $p_i = z_i + p_0$, $i=1,2$, we obtain 
by means of \eqref{eq:semiexp} that
\begin{align*}
 & |\hat f(y_1)(t) - \hat f(y_2)(t)|_2\\
 &\leq q \Big(\Big\|\EE(t)\vektor{E_0}{B_0}\Big\|_{X}\\
 &\qquad\qquad + \int_0^t \|\EE(t-\tau)  j(r_1,p_1) (\tau) \|_X d \tau \Big)
 \|\varphi(.-r_1(t))-\varphi(.-r_2(t))\|_{L^2(\Omega)}\\
 &\quad + q \int_0^t \|\EE(t-\tau)  j(r_1,p_1) (\tau) - \EE(t-\tau)  j(r_2,p_2) (\tau) \|_X d \tau \,\|\varphi(.-r_2(t))\|_{L^2(\Omega)}\\
 &\leq   q M e^{\omega T} \Big(\|j(r_1,p_1)\|_{L^1([0,t];X\times X)} + \|(E_0,B_0)\|_{X\times X}\Big)
 \sqrt{\pi}\,R\,L_\varphi\,|r_1(t) - r_2(t)|_2\\
 &\quad +q M e^{\omega T} \,\sqrt{C_\varphi}\,\|j(r_1,p_1)- j(r_2,p_2)\|_{L^1([0,t];X\times X)}.
\end{align*}
Concerning the expressions involving $j$, we find by employing \eqref{eq:sigmaest} and \eqref{eq:phiest} that
\begin{equation*}
\begin{aligned}
 & \|j(r_1,p_1)- j(r_2,p_2)\|_{L^1([0,t];X\times X)}\\
 &\quad = q \int_0^t \|\varphi(.-r_1(\tau))v(p_1(\tau)) - \varphi(.-r_2(\tau))v(p_2(\tau))\|_X  d\tau\\
 &\quad\leq q \int_0^t \Big(\|\varphi(.-r_1(\tau))-\varphi(.-r_2(\tau))\|_{L^2(\Omega)} |v(p_1(\tau))|_2\\
 &\quad\qquad\qquad + \|\varphi(.-r_2(\tau))\|_{L^2(\Omega)} |v(p_1(\tau)) - v(p_2(\tau))|_2\Big) d\tau\\
 &\quad\leq q\,T\Big( \sqrt{\pi}\,R\, L_\varphi\,c\,\|r_1 - r_2\|_{C([0,t];\R^3)} 
 + \sqrt{C_\varphi}\,\frac{\sqrt{3}}{m_0^q}\,\|p_1 - p_2\|_{C([0,t];\R^3)} \Big)
\end{aligned}
\end{equation*}
and
\begin{equation}\label{eq:jest}
\begin{aligned}
 \|j(r_1,p_1)\|_{L^1([0,t];X\times X)} 
 &= q\, \int_0^t\|\varphi(.-r_1(\tau))\|_{L^2(\Omega)}\,|v(p_1(\tau))|_2 d\tau\\
 &\leq q\,T\,\sqrt{C_\varphi}\,c.
\end{aligned}
\end{equation}
By inserting these estimates we end up with
\begin{equation*}
\begin{aligned}
 |\hat f(y_1)(t) - \hat f(y_2)(t)| &\leq K\, \big(\|r_1 - r_2\|_{C([0,t];\R^3)} + \|p_1 - p_2\|_{C([0,t];\R^3)}\big)\\
 &\leq \sqrt{2} K\, \|y_1 - y_2\|_{C([0,t];\R^3)^2}
\end{aligned}
\end{equation*}
with a constant $K>0$ independent of $t$, $y_1$, and $y_2$.
The Lipschitz continuity of the remaining parts in $f_1$ can be proven by similar estimates.
\end{proof}

\begin{remark}\label{rem:lip}
 We point out that the Lipschitz constant in \eqref{eq:lip} depends on $u$ so that one should rather write
 \begin{equation*}
  |f(y_1,u)(t)-f(y_2,u)(t)|_2 \leq L(u)\, \|y_1-y_2\|_{C([0,t];\R^3)^2} \quad \forall \,t \in [0,T].
 \end{equation*}
 Of course, the proof of existence of a solution to \eqref{eq:nlorentzsred} for fixed $u$ is not affected by this dependency.
\end{remark}

Based on the Lipschitz-estimate in Lemma \ref{lm:Lipred}, existence and uniqueness can now be shown by Banach's contraction principle. 
The arguments are classical and follow the lines of \cite[Section 2.4]{Spohn2004}. For convenience of the reader we sketch the proof in Appendix \ref{app:stateex}.

\begin{theorem}\label{thm:stateex}
 For all $u \in L^2(\Gamma)$ there exists a unique solution $y \in C_{\{0\}}^1([0,T]; \R^3)^2$ of the reduced system \eqref{eq:nlorentzsred}
 and the following a priori estimate is fulfilled
 \begin{equation*}
  \|y\|_{C^1([0,T];\R^3)^2} \leq C_1\, \|u\|_{L^2(\Gamma)} + C_2
 \end{equation*}
 with a constants $C_1,C_2>0$ independent of $u$ and $y$.
\end{theorem}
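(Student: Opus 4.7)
The plan is to reformulate the Cauchy problem \eqref{eq:nlorentzsred} as a fixed-point equation for the integral operator
\[
T_u \colon C_{\{0\}}([0,T];\R^3)^2 \to C_{\{0\}}([0,T];\R^3)^2, \quad (T_u y)(t) := \int_0^t f(y,u)(s)\,ds,
\]
and to apply Banach's contraction principle. A fixed point $y$ of $T_u$ in $C_{\{0\}}([0,T];\R^3)^2$ automatically lies in $C^1_{\{0\}}([0,T];\R^3)^2$, since $\dot y = f(y,u)$ is continuous in $t$ by the mapping properties of $f$ established in Definition~\ref{def:compstate}.

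To secure a contraction on the full interval $[0,T]$ at once, I would work with the Bielecki-type weighted norm $\|y\|_\lambda := \sup_{t\in[0,T]} e^{-\lambda t}|y(t)|_2$, which is equivalent to the usual supremum norm and thus leaves the space complete. The pointwise Lipschitz estimate of Lemma~\ref{lm:Lipred} then yields
\[
e^{-\lambda t}\,|(T_u y_1 - T_u y_2)(t)|_2 \leq L(u)\int_0^t e^{-\lambda(t-s)}\,\|y_1-y_2\|_\lambda\,ds \leq \frac{L(u)}{\lambda}\,\|y_1-y_2\|_\lambda,
\]
so choosing $\lambda > L(u)$ turns $T_u$ into a strict contraction and delivers the unique solution.

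For the a priori bound I would exploit the fact that, for fixed $u$, the right-hand side $f(y,u)(t)$ is actually \emph{uniformly} bounded in $y$ with linear dependence on $\|u\|_{L^2(\Gamma)}$. Inspecting the terms of $f_1$: the contribution from $F_L$ is controlled via \eqref{eq:apriories} by $\|(E_0,B_0)\|_{X\times X}$ plus $\|j(r,p)\|_{L^1([0,T];X\times X)}$, and the latter is bounded by the $y$-independent constant from \eqref{eq:jest} because $|v(p)|_2\leq c$ and $\|\varphi(\cdot - r)\|_{L^2(\Omega)}\leq \sqrt{C_\varphi}$. The $\eta$-term is linear in $u$ thanks to Lemma~\ref{le:existencePos} together with $|\beta(p)|_2 \leq 1$ and the uniform bound on $\nabla\varphi$, and the surface integral is directly linear in $u$ via the trace theorem. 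Since $|f_2(y,u)(t)|_2 = |v(z(t)+p_0)|_2 \leq c$, this produces
\[
|\dot y(t)|_2 = |f(y,u)(t)|_2 \leq C_2 + C_1\|u\|_{L^2(\Gamma)} \quad \forall\, t\in [0,T],
\]
and integration gives the claimed estimate on $\|y\|_{C^1([0,T];\R^3)^2}$ without any Gr\"onwall argument.

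The main obstacle I anticipate is the passage from local to global existence: because the Lipschitz constant from Lemma~\ref{lm:Lipred} depends on $u$ (see Remark~\ref{rem:lip}), a naive use of the unweighted sup-norm would only yield contraction on a short time interval and would force an iteration argument. The weighted-norm device above sidesteps this cleanly precisely because Lemma~\ref{lm:Lipred} gives a constant that is uniform in $t\in[0,T]$. The second subtle point is disentangling the $u$-dependence in the a priori estimate; the observation that $f$ is not merely Lipschitz but also \emph{bounded} in $y$, with an affine bound in $\|u\|_{L^2(\Gamma)}$, is what makes the estimate immediate.
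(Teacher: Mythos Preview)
Your proposal is correct and follows essentially the same route as the paper's proof in Appendix~\ref{app:stateex}: both reformulate \eqref{eq:nlorentzsred} as a fixed-point equation for the integral operator, apply Banach's contraction principle in a Bielecki-weighted supremum norm (the paper takes the weight equal to the Lipschitz constant $L$ itself and obtains the contraction factor $1-e^{-LT}$, while you take any $\lambda>L(u)$), and then derive the a~priori estimate by bounding $|f(y,u)(t)|$ directly---using $|v(p)|_2\le c$, $|\beta(p)|_2\le 1$, the uniform $L^2$-bounds on $\varphi$ and $\nabla\varphi$, the semigroup estimate \eqref{eq:apriories} together with \eqref{eq:jest}, and Lemma~\ref{le:existencePos}---to obtain $|\dot y(t)|_2\le C_1\|u\|_{L^2(\Gamma)}+C_2$. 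The only cosmetic slip is your invocation of ``the trace theorem'' for the boundary integral: what is actually used (and what the paper writes) is the elementary pointwise bound $\|\varphi(\cdot-r)\|_{L^2(\Gamma)}\le C_\varphi\sqrt{|\Gamma|}$, since $\varphi$ is continuous with compact support.
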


\section{Existence of an optimal control}\label{sec:exopt}

With the existence result for the reduced state system in Theorem \ref{thm:stateex} at hand, it is now straightforward 
to establish the existence of a globally optimal control.

\begin{theorem}
 Assume that there is a control $u\in L^2(\Gamma)$ such that the associated state $y = (w,z) \in C_{\{0\}}^1([0,T];\R^3)^2$ 
 satisfies the state constraint $g_i(w(t) + r_0) \leq 0$ for all $i = 1, ..., m$ and all $t\in [0,T]$.
 Then there exists at least one globally optimal control for \eqref{eq:optconexact}.
\end{theorem}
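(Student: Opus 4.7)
The plan is to apply the direct method of the calculus of variations. Since the feasible set is nonempty by assumption, $\alpha>0$, and $J_1,J_2$ are bounded from below, the objective is bounded from below and I may pick a minimizing sequence $(u_n)\subset L^2(\Gamma)$ together with its associated states $y_n = (w_n, z_n)$ solving $e(y_n, u_n) = 0$ and the state constraints $g_i(w_n(t)+r_0)\leq 0$. The Tikhonov term forces $\sup_n \|u_n\|_{L^2(\Gamma)} < \infty$, and Theorem \ref{thm:stateex} then yields $\sup_n \|y_n\|_{C^1([0,T];\R^3)^2} < \infty$. Passing to subsequences (not relabelled), I obtain $u_n \rightharpoonup \bar u$ in $L^2(\Gamma)$ and, by Arzel\`{a}-Ascoli applied to the uniformly bounded and equi-Lipschitz family $(y_n)$, $y_n \to \bar y$ in $C([0,T];\R^3)^2$.

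The key and delicate step is the passage to the limit in the integrated reduced system $y_n(t) = \int_0^t f(y_n, u_n)(\tau)\,d\tau$ in order to conclude $e(\bar y, \bar u) = 0$. Those contributions to $f$ that do not involve $u$---the velocity $v(z+p_0)$ and the semigroup-driven part of the Lorentz force---depend continuously on $y$ in the uniform topology by the same Lipschitz-type estimates used in Lemma \ref{lm:Lipred}, so their passage to the limit follows from the uniform convergence $y_n \to \bar y$ combined with dominated convergence. The principal obstacle is the two terms that depend linearly on $u_n$, namely
\begin{equation*}
-q \int_\Omega \bigl((-\Delta^*)^{-1} R u_n\bigr) \bigl[\nabla \varphi(x - w_n(t) - r_0) \times \beta(z_n(t)+p_0)\bigr] dx
\end{equation*}
and the analogous boundary contribution on $\Gamma$. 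For these I would invoke a weak-strong convergence argument: thanks to $y_n \to \bar y$ uniformly and the smoothness of $\varphi$, the ``test'' functions paired with $u_n$ on $\Gamma$, respectively with $(-\Delta^*)^{-1}Ru_n$ in $\Omega$, converge strongly in $L^2(\Gamma)$ resp.\ $L^2(\Omega)$; at the same time $u_n \rightharpoonup \bar u$ in $L^2(\Gamma)$ implies $(-\Delta^*)^{-1}Ru_n \rightharpoonup (-\Delta^*)^{-1}R\bar u$ in $L^2(\Omega)$ by continuity of the solution operator established in Lemma \ref{le:existencePos}. The resulting pointwise convergence of the integrand in $t$, together with a uniform bound inherited from the a priori estimates, then permits interchanging the outer $\int_0^t$ with the limit by dominated convergence.

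Once $(\bar y, \bar u)$ has been identified as a solution of the reduced system, the remaining steps are routine. The state constraints $g_i(w_n(t)+r_0) \leq 0$ pass to the limit for every $t$ by continuity of $g_i$ and the uniform convergence $w_n \to \bar w$, so $\bar u$ is feasible. For the cost functional, the tracking part $\int_0^T J_1(w_n(t)+r_0)\,dt + J_2(w_n(T)+r_0)$ converges by continuity of $J_1, J_2$ and uniform convergence, while $u \mapsto \tfrac{\alpha}{2}\|u\|_{L^2(\Gamma)}^2$ is weakly lower semicontinuous on $L^2(\Gamma)$. Hence $\JJ(\bar w + r_0, \bar u) \leq \liminf_{n\to\infty} \JJ(w_n + r_0, u_n) = \inf$, which identifies $\bar u$ as a global minimizer. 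The whole argument pivots on the weak-strong step above; once that is in place, the rest is the standard direct-method recipe.
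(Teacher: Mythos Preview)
Your proposal is correct and follows essentially the same route as the paper's proof: direct method, boundedness of $(u_n)$ from the Tikhonov term, boundedness of $(y_n)$ from Theorem~\ref{thm:stateex}, compactness to get $y_n\to\bar y$ uniformly, and the weak--strong pairing for the $u$-linear terms. The only cosmetic differences are that the paper extracts the uniform convergence of $(y_n)$ via the compact embedding $H^1(]0,T[;\R^3)^2\hookrightarrow C([0,T];\R^3)^2$ rather than Arzel\`a--Ascoli, and passes to the limit in the weak formulation $\int_0^T\dot y_n\cdot v=\int_0^T f(y_n,u_n)\cdot v$ instead of the integrated equation; neither choice changes the substance of the argument.
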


\begin{proof}
 By assumption the feasible set of \eqref{eq:optconexact} is non-empty. 
 Thus there exists a minimizing sequence 
 $\{y_n, u_n\} = \{w_n,z_n,u_n\}\subset C_{\{0\}}^1([0,T];\R^3)^2 \times L^2(\Gamma)$, i.e., $e(y_n,u_n) = 0$, 
 $w_n(t) + r_0\in \tilde\Omega$ for all $t\in [0,T]$, and
 \begin{equation*}
  \JJ(w_n + r_0, u_n) \stackrel{n\to\infty}{\longrightarrow} \inf \eqref{eq:optconexact} =: j \in \R\cup \{-\infty\}.
 \end{equation*}
 From Assumption \ref{assu:further} we deduce
 \begin{equation*}
  \frac{\alpha}{2}\,\|u_n\|_{L^2(\Gamma)}^2 \leq \JJ(w_n + r_0, u_n) - \underline{c_1}\,T - \underline{c_2}
 \end{equation*}
 so that $\{u_n\}$ is bounded in $L^2(\Gamma)$. As $e(y_n,u_n) = 0$, Theorem \ref{thm:stateex} yields 
 the boundedness of $\{y_n\}$ in $H^1([0,T];\R^3)^2$. Consequently, there exist weakly converging subsequences, and 
 w.l.o.g.\ we assume weak convergence of the whole sequences, i.e.
 \begin{equation*}
  u_n  \rightharpoonup u^* \text{ in } L^2(\Gamma) \quad \text{and} \quad
  y_n  \rightharpoonup y^* = (w^*,z^*)\text{ in } H^1(]0,T[;\R^3)^2.
 \end{equation*}
 The compactness of the embedding $H^1(]0,T[;\R^3)^2 \embed C([0,T];\R^3)^2$ then yields strong convergence of $\{y_n\}$ in 
 the maximum-norm so that Lemma \ref{lm:Lipred} and Remark \ref{rem:lip} give
 \begin{equation*}
  \|f(y_n,u^*) - f(y^*,u^*)\|_{C([0,T];\R^3)^2} \leq L(u^*)\,\|y_n-y^*\|_{C([0,T];\R^3)^2} \stackrel{n\to\infty}{\longrightarrow} 0.
 \end{equation*}
 Moreover, the strong convergence of the state in $C([0,T];\R^3)^2$ further implies
 \begin{equation*}
  \|\beta(p_n) - \beta(p^*)\|_{C([0,T];\R^3)} \to 0, 
  \quad \|\varphi(. - r_n) - \varphi(.-r^*)\|_{C([0,T];H^1(\Omega))} \to 0.
 \end{equation*}
 As the control only appears linearly in the state system, 
 these convergences allow to pass to the limit in the reduced state equation in weak form, i.e., for every 
 $v = (v_1, v_2)\in L^2(0,T;\R^3)^2$ there holds
 \begin{equation*}
 \begin{aligned}
  &\int_0^T \dot y^*(t)\cdot v(t)\,dt\\
  &= \lim_{n\to\infty} \int_0^T \dot y_n(t)\cdot v(t)\,dt\\
  &= \lim_{n\to\infty} \int_0^T f(y_n,u_n)(t)\cdot v(t)\,dt\\
  &= \lim_{n\to\infty} \Bigg(\int_0^T f(y_n,u^*)(t)\cdot v(t)\,dt\\
  &\qquad - q\int_{\Omega} \big((-\Delta^*)^{-1}R (u_n - u^*)\big) 
  \int_0^T\Big[\nabla\varphi(x-r_n(t)) \times\beta(p_n(t))\Big] \cdot v_1(t) dt\,dx\\ 
  &\qquad + q\int_{\Gamma} (u_n - u^*) \int_0^T \big[\varphi(x-r_n(t)) \,\beta(p_n(t))  \times n \big]\cdot v_1(t)dt\, d\varsigma\Bigg)\\
  &= \int_0^T f(y^*,u^*)(t)\cdot v(t)\,dt.
 \end{aligned}  
 \end{equation*}  
 Therefore, we obtain
 \begin{equation*}
  \dot y^*(t) = f(y^*,u^*)(t) \quad \text{f.a.a.\ } t\in[0,T].
 \end{equation*}
 Because of $y^* \in C([0,T];\R^3)^2$ the right hand side is continuous such that $y^* \in C^1([0,T];\R^3)^2$. 
 From $y_n \to y^*$ in $C([0,T];\R^3)^2$ we further infer that $y^*(0) = 0$, and consequently 
 $y^*$ coincides with the unique solution of \eqref{eq:nlorentzsred} associated with $u^*$.
 The convergence of the state in $C([0,T];\R^3)^2$ and the continuity of $g_i$, $i = 1, ..., m$, moreover yield
 \begin{equation*}
  g_i(w^*(t) + r_0) \leq 0 \quad\forall \, i = 1, ..., m
  \quad \Leftrightarrow \quad w^*(t) + r_0 \in \tilde\Omega
 \end{equation*}
 for all $t\in [0,T]$ such that the state constraint is also fulfilled in the limit. Therefore, the couple $(y^*, u^*)$ 
 fulfills all constraints in \eqref{eq:optconexact}.

 Finally, the strong convergence of $\{y_n\}$ in $C([0,T];\R^3)^2$, the weak convergence of $\{u_n\}$ in $L^2(\Gamma)$, 
 and the weak lower semicontinuity of $\|.\|_{L^2(\Gamma)}^2$ allow to pass to the limit in the objective:
 \begin{equation*}
 \begin{aligned}
  j &= \lim_{n\to\infty} \JJ(w_n + r_0, u_n) \\
  &\geq \lim_{n\to\infty} \Big(\int_0^T J_1(w_n(t)+r_0)\,dt + J_2(w_n(T)+r_0)\Big) + 
  \liminf_{n\to\infty} \frac{\alpha}{2} \int_\Gamma u_n^2\,d\varsigma\\
  &\geq \JJ(w^* + r_0, u^*),
 \end{aligned}
 \end{equation*}
 which implies the optimality of $(y^*,u^*)$.
\end{proof}

\section{First-order necessary optimality conditions}\label{sec:nec}

For the rest of the paper, we slightly change the functional analytical framework of the optimal control problem 
under consideration. To be more precise, we weaken the regularity of the state space in order to obtain a more regular 
adjoint state and treat the state as a function in
$$Y = \{ y \in H^1(]0,T[;\R^3)^2 : y(0) = 0\}.$$
Thus the mapping associated with the reduced state system becomes 
$e: Y \times L^2(\Gamma) \to Z = L^2(]0,T[;\R^3)^2$, with a slight abuse of notation still denoted by $e$. 
It is easily seen that this modification does not affect the above analysis, in particular the proof of existence of 
an optimal control, since the state is treated as a function in $H^1(]0,T[;\R^3)^2$ there anyway.
Note that $H^1(]0,T[;\R^3)^2 \embed C([0,T];\R^3)^2$ so that the mappings $j$ and $F_L$ from Definition \ref{def:compstate} 
are still well-defined.

\begin{remark}
 If a couple $(y,u)\in Y \times L^2(\Gamma)$ satisfies the constraint $e(y,u) = 0$, i.e., 
 \begin{equation*}
   \dot y(t) = f(y,u)(t) \quad \text{f.a.a.\ } t \in [0,T], \quad y(0) = 0,
 \end{equation*}
 then $f(y,u) \in C([0,T];\R^3)^2$ implies $y\in C^1([0,T];\R^3)^2$ so that $y$ coincides with the 
 unique solution of \eqref{eq:nlorentzsred} from Theorem \ref{thm:stateex}. In other words, the treatment of \eqref{eq:optconexact} 
 in the weaker state space $Y$ does not affect the regularity of the optimal state.
\end{remark}

\subsection{The linearized state equation}

We start the derivation of a qualified optimality system by the analysis of the linearized reduced state system.

\begin{lemma}\label{lem:deriv}
The reduced form $e$ is continuously Fr\'echet-differentiable from $Y \times L^2(\Gamma)$ to $Z$.
Its partial derivatives at $(y,u) = (w,z,u)\in Y \times L^2(\Gamma)$ in direction 
$(\phi,h) = (\phi_r, \phi_p,h)\in Y \times L^2(\Gamma)$ are given by
\begin{equation*}
\begin{aligned}
 \Big(\frac{\partial e_1}{\partial y}(y,u) \phi\Big)(t) &= \dot\phi_p(t) - \Big(\ddp{f_1}{y}(y,u)\phi\Big)(t), \\
 \Big(\frac{\partial e_2}{\partial y}(y,u) \phi\Big)(t) &= \dot\phi_r(t) - \Big(\ddp{f_2}{y}(y,u)\phi\Big)(t),\\
 \Big( \frac{\partial e_1}{\partial u}(y,u) h\Big) (t) &= - \Big(\ddp{f_1}{u}(y,u)h\Big)(t), \quad
 \Big( \frac{\partial e_1}{\partial u}(y,u) h\Big) (t) = 0
\end{aligned}
\end{equation*} 
with 
\begin{align*}
 \Big(\ddp{f_1}{u}(y,u)h\Big)(t) &= q\int_{\Gamma} h \varphi(x-r(t)) \beta(p(t)) \times n \, d \varsigma\\
 &\quad -q\int_{\Omega}(-\Delta^*)^{-1} R h [\nabla \varphi(x-r(t)) \times \beta(p(t))] \, dx,\\
 \Big(\ddp{f_2}{y}(y,u)\phi\Big)(t) &= v'(p(t))\phi_p(t),\\
 \intertext{and}
 \Big(\ddp{f_1}{y}(y,u)\phi\Big)(t) &= 
  - q \int_{\Omega} \big[\nabla\varphi(x-r(t))\cdot \phi_r(t)\big] F_L(r,p)(t)\,dx\\
  &\quad + q \int_{\Omega} \varphi(x-r(t)) \big(\partial_y F_L(r,p)\phi\big)(t)\, dx\\
  &\quad + q \int_{\Gamma} u 
  \begin{aligned}[t]
   \Big[ & \varphi(x-r(t))\,\beta'(p(t)) \phi_p(t)\\
   & - \big[\nabla\varphi(x-r(t)) \cdot \phi_r(t)\big] \beta(p(t))\Big] \times n \, d\varsigma
  \end{aligned}\\   
  &\quad + q\int_{\Omega} \big((-\Delta^*)^{-1} R u\big)
  \begin{aligned}[t]
   \Big[ & \nabla^2\varphi(x-r(t)) \phi_r(t) \times \beta(p(t)) \\
   & - \nabla\varphi(x-r(t)) \times \beta'(p(t)) \phi_p(t) \Big] dx,
  \end{aligned}
\end{align*} 
with $r = w + r_0$, $p = z + p_0$, the derivative of the Lorentz force term $F_L$  
\begin{equation*}
\begin{aligned}
 \big(\partial_y F_L(r,p)\phi\big)(t) &= \big(\partial_r F_L(r,p)\phi_r + \partial_p F_L(r,p)\phi_p\big)(t)\\
 &= \int_0^t \EE(t-\tau) \big(j'(r,p)(\tau) \phi(\tau)\big)\,d\tau\\
 &\quad + \beta(p(t)) \times \int_0^t \BB(t-\tau) \big(j'(r,p)(\tau)\phi(\tau)\big)\,d\tau\\
 &\quad + \beta'(p(t))\phi_p(t) \times \Bigg(\BB(t)\vektor{E_0}{B_0} + \int_0^t \BB(t-\tau) j(r,p)(\tau)\,d\tau\Bigg)
\end{aligned}
\end{equation*}
and $j'$ as given in \eqref{eq:jprime}.
\end{lemma}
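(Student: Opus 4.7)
The plan is to verify Fréchet-differentiability of $e$ componentwise by decomposing each nonlinear term into elementary building blocks whose differentiability is either trivial or reducible to standard Nemytskii-operator theory, and then recombining via the chain and product rules. Since $Y$ embeds continuously into $C([0,T];\R^3)^2$, all differentiability results established on the space of continuous functions will carry over to $Y$. The linear operators $y\mapsto \dot w$ and $y\mapsto \dot z$ from $Y$ to $Z$ are trivially $C^1$, and the composition $z\mapsto v(z(\cdot)+p_0)$ appearing in $e_2$ is $C^1$ as a Nemytskii operator induced by the smooth function $v$ on $C([0,T];\R^3)$, which yields the stated formula for $\partial e_2/\partial y$.

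For $e_1$ the key building blocks are $r\mapsto \varphi(\cdot-r(\cdot))$ and $r\mapsto \nabla\varphi(\cdot-r(\cdot))$, viewed as operators from $C([0,T];\R^3)$ into $C([0,T];L^2(\Omega))$ and $C([0,T];L^2(\Omega;\R^3))$, respectively. By Taylor expansion and the Lipschitz continuity of $\nabla^2\varphi$, which is guaranteed by $\varphi\in C^{2,1}(\R^3)$ with compact support, these maps are continuously Fréchet-differentiable with derivatives $\phi_r\mapsto -\nabla\varphi(\cdot-r(\cdot))\,\phi_r(\cdot)$ and $\phi_r\mapsto -\nabla^2\varphi(\cdot-r(\cdot))\,\phi_r(\cdot)$. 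Combining with the smooth Nemytskii operators $p\mapsto \beta(p+p_0)$ and $p\mapsto v(p+p_0)$, the current density $j(r,p)$ becomes a $C^1$ map from $C([0,T];\R^3)^2$ into $C([0,T];X)^2$ with derivative $j'(r,p)$ as in \eqref{eq:jprime}. The Lorentz force $F_L$ is then obtained from $j(r,p)$ by applying the linear continuous convolution operator $j\mapsto \int_0^\cdot\GG(\cdot-\tau)j(\tau)\,d\tau$, extracting the two components $\EE$ and $\BB$ of $\GG$, and forming the bilinear product $\beta(p)\times B$; hence $F_L$ is $C^1$ from $C([0,T];\R^3)^2$ to $C([0,T];X)$, and the product rule reproduces the formula for $\partial_y F_L$ stated in the lemma.

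With these building blocks at hand, $\partial e_1/\partial y$ and $\partial e_1/\partial u$ are obtained by a straightforward product rule applied to each of the three integrals defining $e_1$. The $u$-dependence enters only through the boundary integral, which is bilinear in $(u,\varphi(\cdot-r)\beta(p))$, and through $\eta=(-\Delta^*)^{-1}R u$, which is linear and continuous from $L^2(\Gamma)$ to $L^2(\Omega)$ by Lemma \ref{le:existencePos}; consequently the derivatives in $u$ take the stated form, and the partial derivatives in $y$ of the $u$-dependent terms are computed componentwise using the building blocks from the previous step. Continuity of all partial derivatives in $(y,u)$ follows from continuity of $\nabla\varphi$, $\nabla^2\varphi$, $\beta'$, $v'$, the strong continuity bound \eqref{eq:semiexp} for $\GG$, and linearity of $(-\Delta^*)^{-1}R$.

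The main bookkeeping obstacle is the differentiability of $F_L$: it requires propagating the product-rule derivative of $j$ through the semigroup convolution, controlling the size of the resulting convolution via \eqref{eq:semiexp} along the lines of the estimates used in the proof of Lemma \ref{lm:Lipred}, and then combining the result with the outer bilinear factor $\beta(p)\times(\cdot)$. Everything else reduces to routine Nemytskii-type arguments of the same kind as those already carried out in Lemma \ref{lm:Lipred}.
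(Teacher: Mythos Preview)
Your proposal is correct and follows essentially the same route as the paper: both argue by decomposing $e$ into the linear time-derivative part and the nonlinear Nemytskii-type building blocks, establish Fr\'echet-differentiability of the latter on $C([0,T];\R^3)$ via a Taylor expansion using the Lipschitz continuity of $\nabla^2\varphi$ (the paper even singles out the very same map $r\mapsto\nabla\varphi(\cdot-r)$ as the exemplary computation), exploit linearity in $u$, and then conclude from continuity of the partial derivatives. The only cosmetic difference is that the paper wraps up the final step by an explicit reference to the standard result that existence and continuity of the partial derivatives yield continuous Fr\'echet-differentiability of the full map, which you should state explicitly as well.
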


\begin{proof}
 As a linear and bounded operator the time derivative is clearly continuously Fr\'echet-differentiable for $H^1(]0,T[;\R^3)$ 
 to $L^2(]0,T[;\R^3)$. All nonlinear Nemyzki-operators involved in $f$ are differentiated in 
 spaces of continuous functions. Because of its slightly non-standard structure, we exemplary study the Fr\'echet-differentiability of 
 $r\mapsto \nabla\varphi(.-r)$ from $C([0,T];\R^3)$ to $C([0,T];L^2(\Omega))$:
 \begin{equation*}
 \begin{aligned}
  & \|\nabla\varphi(. - (r+\phi_r)) - \nabla\varphi(.-r) - \nabla^2 \varphi(. - r)\cdot \phi_r\|_{C([0,T];L^2(\Omega))}^2\\
  &\quad = \max_{t\in [0,T]} 
  \Big(\int_\Omega |\nabla \varphi(x-r(t) - \phi_r(t)) - \nabla \varphi(x-r(t)) - \nabla^2\varphi(x-r(t))\phi_r(t)|^2\,dx\\
  &\quad = \max_{t\in [0,T]} 
  \int_\Omega \Big| \int_0^1 \nabla^2\varphi\big(x - r(t) - \theta\phi_r(t)\big)\phi_r(t) d\theta - \nabla^2\varphi(x-r(t))\phi_r(t)\Big|^2\,dx\\
  &\quad \leq \max_{t\in [0,T]} \int_\Omega \Big|\int_0^1 L_{\varphi,2} \,\theta\,|\phi_r(t)|^2 d\theta\Big|^2 dx
  = \frac{1}{4}\, L_{\varphi,2}^2\, |\Omega|\, \|\phi_r(t)\|_{C([0,T];\R^3)}^4,
 \end{aligned}
 \end{equation*}  
 where $L_{\varphi,2}$ denotes the Lipschitz constant of $\nabla^2 \varphi$.
 This gives the partial differentiability of $f$ w.r.t.\ $y$. As $u$ only appears linearly, $f$ is moreover partially differentiable 
 w.r.t.\ $u$. Furthermore, one readily verifies that these partial derivatives are continuous in $(y,u)$. 
 Therefore, \cite[Theorem 3.7.1]{cartan1983} gives the continuous Fr\'echet-differentiability of $e$.
\end{proof}

\begin{lemma}\label{lm:existenceredsys}
 Let $(y,u) \in Y \times L^2(\Gamma)$ be given. Then
 for every $h\in Z$ there exists a unique solution $\phi = (\phi_r, \phi_p) \in Y$ of the linearized equation
 \begin{equation}\label{eq:linode}
  \frac{\partial e}{\partial y}(y,u) \phi = h.
 \end{equation}
\end{lemma}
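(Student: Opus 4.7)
The plan is to recast the linearised system as an affine Volterra fixed-point equation and apply the Banach contraction principle in the spirit of the proof of Theorem \ref{thm:stateex}. Integrating the first-order system from Lemma \ref{lem:deriv} and using $\phi(0)=0$, the equation $\partial_y e(y,u)\phi = h$ is equivalent to
\begin{equation*}
 \phi(t) = \int_0^t \Big(\partial_y f(y,u)\phi\Big)(s)\,ds + H(t)=: (T\phi)(t),\qquad H(t):=\int_0^t h(s)\,ds,
\end{equation*}
where we have split $f = (f_1,f_2)$ as in Section \ref{sec:state}. Since $H\in C([0,T];\R^3)^2$ (as $h\in L^2$), the map $T$ is affine from $C([0,T];\R^3)^2$ to itself, with linear part $L:\phi\mapsto \int_0^\cdot \partial_y f(y,u)\phi$.

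The key step is a causal Lipschitz bound analogous to Lemma \ref{lm:Lipred} of the form
\begin{equation*}
 \big|\big(\partial_y f(y,u)\phi\big)(t)\big|_2 \leq K\,\|\phi\|_{C([0,t];\R^3)^2}\qquad \forall\,t\in[0,T],
\end{equation*}
with a constant $K=K(y,u)$. Inspecting the expressions in Lemma \ref{lem:deriv}, every term is either a pointwise-in-$t$ multiplication by coefficients depending continuously on $(r(t),p(t))$ and bounded through \eqref{eq:sigmaest}, \eqref{eq:phiest} and the regularity of $\varphi$, $\nabla\varphi$, $\nabla^2\varphi$ from \eqref{eq:smearout}, or it enters through $\partial_y F_L$, which by construction is a Volterra operator: it involves only integrals of $j'(r,p)\phi$ from $0$ to $t$, multiplied by the semigroup components $\EE(t-\tau)$, $\BB(t-\tau)$ bounded via \eqref{eq:semiexp}. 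The $u$-dependent terms (the boundary integral over $\Gamma$ and the interior integral against $(-\Delta^\ast)^{-1}Ru$) are also pointwise-in-$t$ and are estimated using Lemma \ref{le:existencePos}. Combining these, one obtains $K$ as required, and in particular
\begin{equation*}
 |(L\phi)(t)|_2 \leq K\int_0^t \|\phi\|_{C([0,\tau];\R^3)^2}\,d\tau.
\end{equation*}

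Iterating this estimate yields $|(L^n\phi)(t)|_2 \leq (Kt)^n/n!\,\|\phi\|_{C([0,t];\R^3)^2}$, so that some power $T^n$ is a strict contraction on $C([0,T];\R^3)^2$, giving a unique fixed point $\phi\in C([0,T];\R^3)^2$ with $\phi(0)=0$. Alternatively, one can equip $C([0,T];\R^3)^2$ with the weighted norm $\|\phi\|_\lambda := \sup_{t\in[0,T]} e^{-\lambda t}|\phi(t)|_2$ for $\lambda$ large enough; then $T$ itself becomes a contraction. For the regularity upgrade to $Y$, note that the fixed-point identity shows $\phi$ is absolutely continuous with $\dot\phi = \partial_y f(y,u)\phi + h$ almost everywhere; the first summand lies in $C([0,T];\R^3)^2\subset L^2$ and $h\in L^2$, so $\dot\phi \in L^2$, hence $\phi\in H^1(]0,T[;\R^3)^2$ with $\phi(0)=0$, i.e.\ $\phi\in Y$. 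Uniqueness already follows from the contraction argument, or directly by linearity: any difference of two solutions satisfies the homogeneous equation and vanishes by Gronwall applied to the causal Lipschitz estimate. The main technical burden is the verification of the Lipschitz bound, which requires patiently parallelling the computations of Lemma \ref{lm:Lipred} for each of the six summands in $\partial_y f$, most notably the term involving $\partial_y F_L$, where one has to exploit the Volterra structure together with \eqref{eq:semiexp}, \eqref{eq:jest} to keep the resulting integrals on $[0,t]$.
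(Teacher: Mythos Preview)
Your proposal is correct and follows essentially the same route as the paper: rewrite $\partial_y e(y,u)\phi = h$ as the integral equation $\phi(t)=\int_0^t(\partial_y f(y,u)\phi)(s)\,ds + \int_0^t h(s)\,ds$, establish the causal bound $|(\partial_y f(y,u)\phi)(t)|_2\le K\,\|\phi\|_{C([0,t];\R^3)^2}$ by paralleling the estimates of Lemma~\ref{lm:Lipred}, and conclude via Banach's contraction principle (the paper uses the weighted-norm variant from Appendix~\ref{app:stateex}). Your additional remarks on the regularity upgrade to $Y$ and on uniqueness via Gronwall are fine and make explicit what the paper leaves implicit.
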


\begin{proof}
 In view of Lemma \ref{lem:deriv}, \eqref{eq:linode} is equivalent to 
 \begin{equation*}
  \vektor{\dot \phi_p(t)}{\dot \phi_r(t)} = \Big(\ddp{f}{y}(y,u)\phi\Big)(t) + h(t) \quad \text{f.a.a.\ } t\in [0,T], \quad \phi(0) = 0
 \end{equation*}
 with $\partial_y f(y,u)\phi = (\partial_y f_1(y,u)\phi, \partial_y f_2(y,u)\phi)$. 
 As in the proof of Theorem \ref{thm:stateex}, existence and uniqueness of the equivalent integral equation, given by
 \begin{equation*}
  \vektor{ \phi_p(t)}{ \phi_r(t)} = \int_0^t \Big[\Big(\ddp{f}{y}(y,u)\phi\Big)(\tau) + h(\tau)\Big] d \tau,
 \end{equation*}
 can again be proven by Banach's contraction principle, provided that there is a constant $C>0$ such that
 \begin{equation*}
  \Big|\Big(\ddp{f}{y}(y,u)\phi\Big)(t)\Big|_2 \leq C\,\|\phi\|_{C([0,t];\R^3)^2} \quad \forall\, t\in [0,T],
 \end{equation*}
 cf.\ \eqref{eq:lip}. (Note in this context that $\phi \mapsto \partial_yf(y,u)\phi$ is a linear mapping so that 
 Lipschitz continuity is equivalent to boundedness.) The latter inequality however can be verified by estimates 
 similar to the proof of Lemma \ref{lm:Lipred}. 
\end{proof}

\subsection{KKT conditions}

Having established the differentiability of the reduced state system, we are now in the position to 
derive first-order optimality system in qualified form, i.e., Karush-Kuhn-Tucker (KKT) conditions involving Lagrange multipliers 
associated with the constraints in \eqref{eq:optconexact}. 
To this end, let $(y^*,u^*) = (w^*, z^*,u^*)\in Y \times L^2(\Gamma)$ be a arbitrary local optimum of \eqref{eq:optconexact}.
As before, we set $r^* = w^* + r_0$ and $p^* = z^* + p_0$ in all what follows.
It is known that the existence of Lagrange multipliers requires certain constraint qualifications, see 
e.g.\ \cite{ZoweKurcyusz1979}. In our case, one of these, namely the surjectivity of $\partial_y e(y^*,u^*)$, was established in 
Lemma \ref{lm:existenceredsys}. 
However, we need an additional condition to obtain a Lagrange multiplier for the pointwise state constraint in \eqref{eq:optconexact}, too.

\begin{assumption}[Linearized Slater condition]\label{assu:slater}
 We assume that there is a function $\hat h\in L^2(\Gamma)$ so that 
 \begin{equation}\label{eq:lSlater}
 	g_i(r^*(t)) + g_i'(r^*(t)) \hat\phi_r(t) < 0 \quad \forall \, t \in [0,T],\; i= 1,...,m,
 \end{equation}
 where $\hat\phi = (\hat\phi_r,\hat\phi_p)\in Y$ is the solution to \eqref{eq:linode} for $h = \hat h$.
\end{assumption}

Note that the Nemyzki operators associated with $g_1, ..., g_m$ are Fr\'echet-differentiable from $C([0,T];\R^3)$ 
to $C([0,T])$ by Assumption \ref{assu:further}. The same holds for the functions $J_1$ and $J_2$ within the objective.

Given that Assumption \ref{assu:slater} is fulfilled, one can establish the existence of Lagrange multipliers, 
see for instance \cite[Section 1.7.3.4]{HinzePinnauUlbrichUlbrich09}. To be more precise, under 
Assumption \ref{assu:slater} there exists
$(\pi,\varrho,\lambda) \in Z \times C([0,T]; \R^m)^*$ such that the following KKT conditions are satisfied:
\begin{subequations}\label{seq:kktsys}
\begin{gather}
 e(y^*,u^*)(t) = 0\quad \forall\, t\in [0,T] \label{seq:kktstate}\\[2mm]
 \frac{\partial e}{\partial y}(y^*,u^*)^* \vektor{\pi}{\varrho} 
 -\frac{\partial \JJ}{\partial y}(r^*,u^*)-\vektor{g'(r^*)^* \lambda}{0} =0 \quad \text{in } Y^* \label{seq:kktad}\\
 \frac{\partial \JJ}{\partial u}(r^*,u^*) + \frac{\partial e}{\partial u}(y^*,u^*)^* \vektor{\pi}{\varrho} = 0 
 \quad \text{in } L^2(\Gamma) \label{seq:kktgrad}\\[2mm]
 \begin{aligned}
  g_i(r^*(t)) &\leq 0 \quad \forall\, t\in [0,T],\\
  \lambda_i &\geq 0, \quad \langle \lambda_i, g_i(r^*)\rangle_{C([0,T])^*, C([0,T])}=0, \quad i = 1, ...m. 
 \end{aligned}\label{seq:kktcomp}
\end{gather}
\end{subequations}
Herein the inequality $\lambda_i \geq 0$ is to be understood in a distributional sense, i.e., 
$\dual{\lambda_i}{v} \geq 0$ for all $v\in C([0,T])$ with $v(t) \geq 0$ for all $t\in [0,T]$. 
Moreover, we set $g := (g_1, ..., g_m)$ and denote by $g'$ the associated Jacobian.

For the rest of this section, we aim to transfer \eqref{seq:kktad} to an adjoint system 
and to evaluate the gradient equation in \eqref{seq:kktgrad}. We start with \eqref{seq:kktad}, which in 
variational form  reads as follows
\begin{equation}\label{mul:adjeq}
\begin{aligned}
 \int_0^T \Big[ \pi(t) \cdot \Big(\ddp{e_1}{y}(y^*, u^*)\phi\Big)(t)
 + \varrho(t) \cdot \Big(\ddp{e_2}{y}(y^*, u^*)\phi\Big)(t)\Big] dt \qquad\quad\qquad &\\
 - \sdual{\ddp{\JJ}{r}(r^*, u^*)}{\phi_r}_{Y^*,Y} - \dual{\lambda}{g'(r^*)\phi_r}_{C([0,T];\R^m)^*, C([0,T];\R^m)} = 0 \quad&\\
 \forall\, \phi \in Y.&
\end{aligned}
\end{equation}
By employing Lemma \ref{lem:deriv} we find for the first term in \eqref{mul:adjeq}
\begin{equation*}
\begin{aligned}
 & \int_0^T \pi(t) \cdot \Big(\ddp{e_1}{y}(y^*, u^*)\phi\Big)(t) \,dt\\
 & \; = \int_0^T \dot\phi_p(t)\cdot \pi(t)\,dt - q\,I_L(\pi, y^*, \phi)\\
 &\quad + q \int_0^T \phi_r(t)\cdot 
 \begin{aligned}[t]
  \Bigg( & \int_{\Omega} \big[F_L(r^*,p^*)(t)\cdot \pi(t)\big] \nabla\varphi(x-r^*(t))\,dx\\
  & + \int_{\Omega}\big((-\Delta^*)^{-1} R u^*\big) \Big[\beta(p^*(t)) \times  \nabla^2\varphi(x-r^*(t)) \pi(t)\Big] dx\\
  & - \int_{\Gamma} u^* \big[\nabla\varphi(x-r^*(t)) \cdot \pi(t)\big]\big( n \times  \beta(p^*(t))\big) \, d\varsigma \Bigg) dt
 \end{aligned}\\
 &\quad - q \int_0^T \phi_p(t)\cdot 
 \begin{aligned}[t]
  \Bigg( & \int_{\Omega}\big((-\Delta^*)^{-1} R u^*\big) \Big[\beta'(p^*(t))\pi(t) \times  \nabla\varphi(x-r^*(t))\Big] dx\\
  & - \int_{\Gamma} u^* \,\varphi(x-r^*(t))\big( n \times  \beta'(p^*(t)) \pi(t)\big) \, d\varsigma \Bigg) dt, 
 \end{aligned}
\end{aligned}
\end{equation*}
where $I_L(\pi, y^*,\phi)$ is defined by
\begin{equation*}
 I_L(\pi, y^*,\phi)
 := \int_0^T \pi(t) \cdot \int_{\Omega} \varphi(x-r^*(t)) \big(\partial_y F_L(r^*,p^*)\phi\big)(t)\, dx\,dt.
\end{equation*}
In view of Lemma \ref{lem:deriv}, applying Fubini's theorem to this expression leads to
\begin{equation*}
\begin{aligned}
 & I_L(\pi, y^*,\phi)\\
 &= \begin{aligned}[t]
 \int_0^T\int_0^t \int_\Omega \Big[ & \EE(t-\tau) \big(j'(r^*,p^*)(\tau)\phi(\tau)\big)\\
 &+ \beta(p^*(t)) \times \BB(t-\tau) \big(j'(r^*,p^*)(\tau) \phi(\tau)\big)\Big] \cdot \varphi(x-r^*(t))\pi(t) \,dx\,d\tau\,dt \\
 \end{aligned}\\
 &\quad + \int_0^T\int_\Omega \varphi(x-r^*(t))\, \beta'(p(t))\phi_p(t) \times B^*(t) \cdot \pi(t)\,dx\,dt\\
 &= \int_0^T \phi(t) \cdot \int_\Omega j'(r^*,p^*)(t)^\top \int_t^T \GG(\tau-t)^*\kappa(r^*,p^*,\pi)(\tau)\,d\tau\,dx\,dt\\
 &\quad + \int_0^T \phi_p(t) \cdot \int_\Omega B^*(t) \times \varphi(x-r^*(t))\,\beta'(p^*(t))\pi(t)\, dx\,dt,
\end{aligned}
\end{equation*}
where we abbreviated  
$$B^*(t) := \BB(t)\vektor{E_0}{B_0} + \int_0^t \BB(t-\tau) j(r,p)(\tau)\,d\tau$$
and set
\begin{equation*}
\begin{aligned}
 &\kappa: \R^3\times \R^3\times \R^3 \to \R^3\times \R^3\\
 &\kappa(r,p,\pi) := \vektor{\varphi(x-r)\,\pi}{\pi \times \varphi(x-r)\,\beta(p)}.
\end{aligned}
\end{equation*}
Moreover let us define
\begin{equation}\label{eq:adEBfields}
 \vektor{\Phi(t)}{\Psi(t)} :=  \int_t^T \GG(\tau-t)^*\kappa(r^*,p^*,\pi)(\tau)\,d\tau
\end{equation}
Since $-i\AA$ is self-adjoint, the theorem of Stone implies that $\GG(t)^*$ is the semigroup generated by the adjoint operator 
\begin{equation*}
 \AA^*: X\times X \to X\times X,\quad 
 \AA^* = \left(\begin{matrix}
  0 & \curl\\
  -\curl & 0\\
 \end{matrix} \right)
\end{equation*}
with domain $D(\AA^*) = D(\AA) = \Hc$. Thus $(\Phi, \Psi)\in C([0,T];X)^2$ is the mild solution of the following backward-in-time problem:
\begin{equation}\label{eq:admax}
\begin{aligned}
 -\frac{\partial }{\partial t} \vektor{\Phi(t)}{\Psi(t)} 
 + \AA^* \vektor{\Phi(t)}{\Psi(t)} &= 
 \begin{pmatrix}
  \varphi(\,.\,-r^*(t))\,\pi(t)\\[1mm]
  \pi(t) \times \varphi(\,.\,-r^*(t))\,\beta(p^*(t))
 \end{pmatrix}\\
 \Phi(T) = \Psi(T) &= 0.
\end{aligned}
\end{equation}
By setting $\omega := (\varrho,\pi)$ and summarizing the above transformations, we obtain for the first two addends in \eqref{mul:adjeq}
\begin{multline*}
  \int_0^T \Big[\pi(t) \cdot \Big(\ddp{e_1}{y}(y^*, u^*)\phi\Big)(t) 
  + \varrho(t) \cdot \Big(\ddp{e_2}{y}(y^*, u^*)\phi\Big)(t)\Big] dt \\
  = \int_0^T \dot\phi(t)\cdot \omega(t)\,dt + \int_0^T \phi(t) \cdot A(y^*,u^*,\omega)(t)\,dt
 \end{multline*}
with 
\begin{equation}\label{eq:Adef}
\begin{aligned}
 & A(y^*,u^*,\omega)(t) = \vektor{A_r(y^*,u^*,\omega)(t)}{A_p(y^*,u^*,\omega)(t)}\\
 &\quad := q \begin{pmatrix}
  \int_{\Omega} \big[F_L(r^*,p^*)(t)\cdot \pi(t)\big] \nabla\varphi(x-r^*(t))\,dx\\[2mm]
  - \int_\Omega B^*(t) \times \varphi(x-r^*(t))\,\beta'(p^*(t))\pi(t)\, dx 
 \end{pmatrix}\\
 &\qquad + 
 q \begin{pmatrix}
  \int_{\Omega} \eta^* \Big[\beta(p^*(t)) \times  \nabla^2\varphi(x-r^*(t)) \pi(t)\Big] dx\\[2mm]
  - \int_{\Omega}\eta^* \Big[\beta'(p^*(t))\pi(t) \times  \nabla\varphi(x-r^*(t))\Big] dx
 \end{pmatrix}\\
 &\qquad + 
 q \begin{pmatrix}
  -\int_{\Gamma} u^* \big[\nabla\varphi(x-r^*(t)) \cdot \pi(t)\big]\big( n \times  \beta(p^*(t))\big) \, d\varsigma\\[2mm]
  \int_{\Gamma} u^* \,\varphi(x-r^*(t))\big( n \times  \beta'(p^*(t)) \pi(t)\big) \, d\varsigma
 \end{pmatrix}\\
  &\qquad + q^2
  \begin{pmatrix}
   -\int_\Omega v(p^*(t)) \big[\nabla\varphi(x - r^*(t)) \cdot \Phi(t)\big] dx\\[2mm]
    \int_\Omega \varphi(x-r^*(t))\, v'(p^*(t))\Phi(t)\, dx
  \end{pmatrix}    
  - \begin{pmatrix}
  0 \\  v'(p^*(t))\varrho(t)
 \end{pmatrix},
\end{aligned}
\end{equation}
where $\eta^* = (-\Delta^*)^{-1}\,R\,u^*$.
Thus the adjoint equation \eqref{mul:adjeq} becomes
\begin{equation}\label{eq:admitmass}
\begin{aligned}
 \int_0^T \dot\phi(t)\cdot \omega(t)\,dt + \int_0^T \phi(t) \cdot A(y^*,u^*,\omega)(t)\,dt 
 - \sdual{\ddp{\JJ}{r}(r^*, u^*)}{\phi_r}_{Y^*,Y}&\\
 - \dual{\lambda}{g'(r^*)\phi_r}_{C([0,T];\R^m)^*, C([0,T];\R^m)} = 0 
 \quad\forall \, \phi \in &\, Y.
\end{aligned}
\end{equation}
By the Riesz representation theorem $\lambda \in C([0,T];\R^m)^*$ can be identified with a function of bounded variations. 
This leads to the following result, whose detailed proof is given in Appendix \ref{app:nbv}.

\begin{lemma}\label{lem:nbv}
 The adjoint particle position $\varrho$ and the adjoint momentum $\pi$ satisfy 
 $\varrho \in \mathrm{BV}([0,T];\R^3)$ and $\pi \in W^{1,\infty}(]0,T[;\R^3)$.
 Together with a function $\mu\in \mathrm{NBV}([0,T];\R^m)$ they fulfill the following ODEs backward in time:
 \begin{align}
  -\dot\pi(t) &= - A_p(y^*, u^*, \varrho, \pi)(t) \quad \text{a.e.\ in } ]0,T[ \label{eq:piode}\\
  \pi(T) &= 0 \label{eq:piend}\\
  - \dot\varrho(t) &= - A_r(y^*, u^*, \varrho, \pi)(t) + \nabla J_1(r^*(t)) - g'(r^*(t))^\top \dot\mu(t) 
  \quad \text{a.e.\ in } ]0,T[ \label{eq:rhoode}\\
  \varrho(T) &= \nabla J_2(r^*(T)). \label{eq:rhoend}
 \end{align}
 In addition, $\mu$ is monotone increasing and satisfies
 \begin{equation*}
  \int_0^T g(r^*(t)) \cdot d\mu(t) = 0.
 \end{equation*}
 Moreover, $\varrho$ only admits finitely many points of discontinuity $t_1, ..., t_\ell$ in $]0,T[$, at each of which 
 \begin{equation}\label{eq:rhojump}
  \varrho(t_i) - \lim_{\varepsilon\searrow 0}\varrho(t_i - \varepsilon)
  = g'(r^*(t_i))^\top \big(\lim_{\varepsilon \searrow 0} \mu(t_i-\varepsilon) - \mu(t_i)\big), \quad
  i = 1, ..., \ell,
 \end{equation}
 holds true.
\end{lemma}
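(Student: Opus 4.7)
The plan is to start from the variational adjoint identity \eqref{eq:admitmass} and extract pointwise information via test-function arguments. First, I apply the Riesz representation theorem to identify the Lagrange multiplier $\lambda\in C([0,T];\R^m)^*$ with a vector-valued regular Borel measure, which by the standard NBV normalization corresponds to a function $\mu\in\NBV([0,T];\R^m)$. Non-negativity of $\lambda_i$ in \eqref{seq:kktcomp}, understood distributionally, translates directly into the statement that each $\mu_i$ is monotone increasing, and the complementarity $\dual{\lambda_i}{g_i(r^*)}=0$ rewrites as the Riemann--Stieltjes identity $\int_0^T g(r^*(t))\cdot d\mu(t)=0$.

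Next, I integrate the term $\int_0^T\dot\phi\cdot\omega\,dt$ by parts. Since test directions $\phi\in Y$ satisfy $\phi(0)=0$ with $\phi(T)$ free, this produces a distributional derivative $-d\omega$ acting on $\phi$ together with a boundary contribution $\phi(T)\cdot\omega(T)$. Specializing first to $\phi=(0,\phi_p)$ with $\phi_p\in C_c^\infty(]0,T[;\R^3)$ eliminates all contributions from $\JJ$ and $\lambda$, and the fundamental lemma of the calculus of variations yields $-d\pi+A_p\,dt=0$ on $]0,T[$. The explicit formula for $A_p$ in \eqref{eq:Adef}, together with the $C^{2,1}$-regularity of $\varphi$, the continuity of $y^*$, and the boundedness of $\beta,\beta'$, shows $A_p\in L^\infty(]0,T[;\R^3)$, whence $\pi\in W^{1,\infty}(]0,T[;\R^3)$ satisfies \eqref{eq:piode} pointwise a.e. Allowing $\phi_p(T)\neq 0$ and using that $\JJ$ is independent of $p$ then forces $\pi(T)=0$.

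The analogous step with $\phi=(\phi_r,0)$, $\phi_r\in C_c^\infty(]0,T[;\R^3)$, yields the distributional equation
\begin{equation*}
 -d\varrho+A_r\,dt-\nabla J_1(r^*)\,dt - g'(r^*)^\top d\mu = 0 \quad \text{on } \,]0,T[.
\end{equation*}
Since $A_r-\nabla J_1(r^*)\in L^\infty$ and $g'(r^*)^\top d\mu$ is a finite vector-valued regular measure, the distributional derivative $d\varrho$ is itself a finite measure, so $\varrho\in \BV([0,T];\R^3)$ and \eqref{eq:rhoode} holds in the measure sense. Taking $\phi_r(T)\neq 0$ and equating boundary contributions reads off the terminal condition $\varrho(T)=\nabla J_2(r^*(T))$. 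The jump relation \eqref{eq:rhojump} then follows by inspecting the singular part of the identity $d\varrho=A_r\,dt-\nabla J_1(r^*)\,dt-g'(r^*)^\top d\mu$ at each discontinuity $t_i$ of $\mu$: the absolutely continuous part contributes nothing, and the atomic component gives $\varrho(t_i)-\varrho(t_i^-)=-g'(r^*(t_i))^\top(\mu(t_i)-\mu(t_i^-))$, which is precisely \eqref{eq:rhojump}.

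The step I expect to be the main obstacle is the assertion that $\varrho$ admits only \emph{finitely} many discontinuities. General BV theory only guarantees at most countably many jumps, and the jump set of $\varrho$ coincides with that of $\mu$, which by complementarity is contained in the contact set $\{t\in[0,T] : g_i(r^*(t))=0 \text{ for some } i\}$. Reducing countably many to finitely many requires extra structural input, e.g.\ exploiting the $C^1$-regularity of the $g_i$ (Assumption \ref{assu:further}) together with the continuous differentiability of $r^*$ provided by Theorem \ref{thm:stateex}, in order to show via a covering argument that the set of points at which $\mu$ puts non-trivial mass consists of isolated instants. This is the technically delicate part and is where a non-degeneracy hypothesis on the active constraints enters; the rest of the argument is a rather routine combination of IBP, Riesz representation, and the fundamental lemma.
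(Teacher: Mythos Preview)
Your overall strategy matches the paper's almost step for step: Riesz representation for $\lambda$, monotonicity of $\mu$ from $\lambda\geq 0$, complementarity rewritten as a Stieltjes integral, then testing \eqref{eq:admitmass} with compactly supported $\phi$ to extract the pointwise ODEs, and finally reading off the terminal conditions by allowing $\phi(T)\neq 0$. The only technical difference is that the paper does not integrate by parts first and then invoke the fundamental lemma in distributional form; instead it applies a Du~Bois--Raymond lemma for Stieltjes integrals (cf.\ \cite[Lemma~3.1.9]{Gerdts2012}) directly to the variational identity, which yields the integral representations \eqref{eq:rho}--\eqref{eq:pi} and hence $\omega\in\BV$ in one stroke. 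Your distributional route is equivalent, but note that the boundary evaluation $\omega(T)$ only makes sense \emph{after} you have established $\omega\in\BV$, so the order ``first $\omega\in\BV$, then IBP for the terminal conditions'' (as the paper does) is cleaner than ``IBP first, then regularity''.

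On the finiteness of the jump set you are right to flag a gap, and you should be aware that the paper's own argument does not close it either. What the paper actually proves is only that the discontinuities of $\varrho$ coincide with those of $\mu$: integrating the Stieltjes term in \eqref{eq:rho} by parts shows that $t\mapsto \varrho(t)+g'(r^*(t))^\top\mu(t)$ is continuous, whence the jump sets agree and \eqref{eq:rhojump} follows. No separate argument is given that this common jump set is finite rather than merely countable; the covering/non-degeneracy considerations you sketch are \emph{not} carried out in the paper. So your proposal is as complete as the paper's proof on this point, and your suspicion that an additional structural hypothesis is needed for genuine finiteness is well founded.
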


Next we turn to the gradient equation \eqref{seq:kktgrad}.
Focusing on the second addend in \eqref{seq:kktgrad}, we obtain by means of Lemma \ref{lem:deriv} that
\begin{equation*}
\begin{aligned}
 &\int_\Gamma \Big(\frac{\partial e}{\partial u}(y^*,u^*)^* \omega\Big)\phi_u \,d\varsigma\\
 &\quad = \int_0^T  \vektor{\pi(t)}{\varrho(t)} \cdot \Big(\frac{\partial e}{\partial u}(y^*,u^*) \phi_u\Big)(t)\,dt\\
 &\quad = q \int_0^T \pi(t) \cdot 
 \int_{\Omega} \big((-\Delta^*)^{-1} R \phi_u\big) \big[\nabla\varphi(x-r^*(t)) \times \beta(p^*(t))\big] \, dx \, dt\\ 
 &\qquad  -q \int_0^T \pi(t) \cdot \int_{\Gamma} \phi_u \varphi(x-r^*(t)) \beta(t,p^*(t)) \times n \, d\varsigma \, dt\\
 &\quad = \int_\Gamma \phi_u  \,q \int_0^T 
 \begin{aligned}[t]
  \Big( & R^* (-\Delta)^{-1} \Big[\big(\nabla\varphi(x-r^*(t)) \times \beta(p^*(t))\big) \cdot \pi(t)\Big]\\
  & - \big[\varphi(x-r^*(t)) \beta(p^*(t)) \times n\big] \cdot \pi(t) \Big) dt\,d\varsigma.
 \end{aligned}  
\end{aligned}
\end{equation*}
Let us define the adjoint Poisson solution by
\begin{equation*}
 \chi(t) :=  - \Delta^{-1} \Big[\big(\nabla \varphi(\,.\,-r^*(t)) \times \beta(p^*(t))\big) \cdot \pi(t)\Big] \in \HH.
\end{equation*}
Note that the regularity w.r.t.\ time carries over from $\pi$ to $\chi$ so that 
\begin{equation*}
 \chi \in W^{1,\infty}(]0,T[; \HH).
\end{equation*}
Then, in view of $\partial_u \JJ(r^*,u^*) = \alpha u^* \in L^2(\Gamma)$ and $R^* = - \partial_n : \HH \to \L^2(\Gamma)$, 
the gradient equation \eqref{seq:kktgrad} becomes
\begin{multline*}
 \int_{\Gamma} \Big( q \int_0^T \Big[-\partial_n  \chi(t) - \big[\varphi(x-r^*(t)) \beta(p^*(t)) \times n\big] \cdot \pi(t) \Big] dt
 + \alpha u^* \Big)   \phi_u \, d\varsigma = 0\\
 \forall\, \phi_u \in L^2(\Gamma)
\end{multline*}
and the fundamental lemma of calculus of variations yields
\begin{equation*}
 u^*(x) = \frac{q}{\alpha} \int_0^T \Big[\partial_n  \chi(x,t) + \big[\varphi(x-r^*(t)) \beta(p^*(t)) \times n\big] \cdot \pi(t) \Big] dt 
 \quad \text{a.e.\ on } \Gamma.
\end{equation*}

Summarizing the results we have, thus, derived the following first-order necessary optimality conditions for \eqref{eq:optconexact}:

\begin{theorem}[KKT conditions]\label{thm:kkt}
Let $u^* \in L^2(\Gamma)$ be a locally optimal boundary control with associated states 
$(E^*,B^*,\eta^*,r^*,p^*) \in C([0,T];X)^2 \times L^2(\Omega) \times C^1([0,T]; \R^3)^2$. 
Assume further that the linearized Slater condition in Assumption \ref{assu:slater} is fulfilled. Then there exist adjoint states 
\begin{equation*}
 (\Phi,\Psi,\chi,\varrho,\pi) \in C([0,T];X)^2 \times W^{1,\infty}(]0,T[; \HH) \times \mathrm{BV}([0,T];\R^3) \times W^{1,\infty}([0,T];\R^3)
\end{equation*}
and a Lagrange multiplier $\mu \in \mathrm{NBV}([0,T];\R^m)$ so that following optimality system is fulfilled:\\[1mm]
\underline{State equations:}\\[1mm]
Maxwell equations:
\begin{equation*}
\begin{aligned}
 \frac{\partial }{\partial t} \vektor{E^*(t)}{B^*(t)} 
 + \AA \vektor{E^*(t)}{B^*(t)} &= \vektor{-q\, \varphi(\,.\,-r^*(t))\, v(p^*(t))}{0} \quad \text{a.e.\ in } [0,T] \\
 E^*(0) = E_0, \quad B^*(0) &= B_0
\end{aligned}
\end{equation*}
Newton-Lorenz equation:
\begin{equation*}
\begin{aligned}
 \dot p^*(t) &= q 
 \begin{aligned}[t]
  \Big( & \int_{\Omega} \varphi(x-r^*(t))\big( E^*(t) + \beta(p^*(t)) \times B^*(t) \big) dx\\
  & + \int_{\Gamma} u^*\, \varphi(x-r^*(t))\, \beta(p^*(t))  \times n\, d\varsigma\\
  & - \int_{\Omega} \eta^* \big[\nabla\varphi(x-r^*(t)) \times \beta(p^*(t))\big] dx \Big) \quad \forall \, t \in [0,T]
 \end{aligned}\\
 p^*(0) &= 0\\
 \dot r^*(t) &= v(p^*(t))  \quad \forall \, t \in [0,T]\\
 r^*(0) &= 0
\end{aligned}
\end{equation*}
Poisson's equation in very weak form:
\begin{equation*}
 \int_{\Omega} \eta^* \Delta v \, dx = \int_{\Gamma} u^* \, \partial_n v \, d\varsigma \quad \forall \,  v \in \HH
\end{equation*}
\underline{Adjoint equations:}\\[1mm]
Adjoint Maxwell equations:\\
\begin{equation*}
\begin{aligned}
 - \frac{\partial }{\partial t} \vektor{\Phi(t)}{\Psi(t)} 
 + \AA^* \vektor{\Phi(t)}{\Psi(t)} &= 
 \begin{pmatrix}
  \varphi(\,.\,-r^*(t))\,\pi(t)\\[1mm]
  \pi(t) \times \varphi(\,.\,-r^*(t))\,\beta(p^*(t))
 \end{pmatrix}\\
 \Phi(T) = \Psi(T) &= 0
\end{aligned}
\end{equation*}
Adjoint ODE system:
\begin{equation}
\begin{aligned}
 - \dot\pi(t) &= 
 q \int_\Omega B^*(t) \times \varphi(x-r^*(t))\,\beta'(p^*(t))\pi(t)\, dx \\
 &\quad + q \int_{\Omega}\eta^* \big[\beta'(p^*(t))\pi(t) \times  \nabla\varphi(x-r^*(t))\big] dx\\
 &\quad - q \int_{\Gamma} u^* \,\varphi(x-r^*(t))\big( n \times  \beta'(p^*(t)) \pi(t)\big) \, d\varsigma\\
 &\quad - q^2 \int_\Omega \varphi(x-r^*(t))\, v'(p^*(t))\Phi(t)\, dx + v'(p^*(t))\varrho(t) \\ 
 &\hspace*{7cm} \text{a.e.\ in } ]0,T[\\
 \pi(T) &= 0
\end{aligned}
\end{equation}
\begin{equation}\label{eq:rhoode2}
\begin{aligned}
 - \dot\varrho(t)
 &= -q \int_{\Omega}\eta^* \big[\beta(p^*(t)) \times  \nabla^2\varphi(x-r^*(t)) \pi(t)\big] dx\\
 &\quad + q \int_{\Gamma} u^* \big[\nabla\varphi(x-r^*(t)) \cdot \pi(t)\big]\big( n \times  \beta(p^*(t))\big) \, d\varsigma\\
 &\quad - q\int_{\Omega} \big[\big(E^*(t) + \beta(p^*(t)) \times  B^*(t)\big)\cdot \pi(t)\big] \nabla\varphi(x-r^*(t)) \,dx\\
 &\quad + q^2\int_\Omega v(p^*(t)) \big[\nabla\varphi(x - r^*(t)) \cdot \Phi(t)\big] dx\\
 &\quad + \nabla J_1(r^*(t)) - g'(r^*(t))^\top \dot\mu(t) \qquad \text{a.e.\ in } ]0,T[\\
 \varrho(T) &= \nabla J_2(r^*(T))
\end{aligned}
\end{equation}
Jump conditions:
\begin{equation}\label{eq:rhojump2}
 \varrho(t_i) - \lim_{\varepsilon\searrow 0}\varrho(t_i - \varepsilon)
 = g'(r^*(t_i))^\top \big(\lim_{\varepsilon \searrow 0} \mu(t_i-\varepsilon) - \mu(t_i)\big), \quad
 i = 1, ..., \ell,
\end{equation}
Adjoint Poisson equation:
\begin{equation*}
\begin{aligned}
 -\Delta \chi(x,t) &=  \big(\nabla \varphi(x-r^*(t)) \times \beta(p^*(t))\big) \cdot \pi(t) 
 & & \text{ f.a.a.\ } (x,t)\in\; ]0,T[\times \Omega \\
 \chi(x,t) &= 0 & & \text{ f.a.a.\ } (x,t)\in \;]0,T[\times \Gamma
\end{aligned}
\end{equation*} 
\underline{Gradient equation:}
\begin{equation*}
 u^*(x) = \frac{q}{\alpha} \int_0^T \Big[\partial_n  \chi(x,t) + \big[\varphi(x-r^*(t)) \beta(p^*(t)) \times n\big] \cdot \pi(t) \Big] dt 
 \quad \text{a.e.\ on } \Gamma
\end{equation*}
\underline{Complementary relations:}
\begin{equation*}
\begin{aligned}
 \mu_j \text{ monotone increasing}, \quad
 \int_0^T g_j(r^*(t)) \, d\mu_j(t) = 0,\quad 
 g_j(r^*(t)) &\leq 0 \;\forall\, t \in [0,T]\\
 &\text{for all } j= 1,..,m
\end{aligned}
\end{equation*}
\end{theorem}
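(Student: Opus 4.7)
The plan is to apply the standard KKT theorem for abstract optimization problems with operator equality constraints and pointwise state inequality constraints (as in \cite[Section 1.7.3.4]{HinzePinnauUlbrichUlbrich09}), and then post-process the resulting abstract multipliers into the explicit system of PDEs, ODEs, jump conditions and boundary relations asserted. All of the ingredients needed for this post-processing have been prepared in the preceding lemmas: Fr\'echet-differentiability of $e$ (Lemma~\ref{lem:deriv}), surjectivity of $\partial_y e(y^*,u^*)$ (Lemma~\ref{lm:existenceredsys}), the NBV-structure of the multiplier and the jump conditions (Lemma~\ref{lem:nbv}), and self-adjointness of $-i\AA$ (the theorem of Stone).

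First I would verify the constraint qualifications: the operator equality constraint $e(y,u)=0$ has a surjective partial linearization in $y$ by Lemma~\ref{lm:existenceredsys}, and the pointwise state constraint $g(w+r_0)\le 0$ satisfies the linearized Slater condition by Assumption~\ref{assu:slater}. The abstract KKT theorem then produces multipliers $(\pi,\varrho)\in Z$ and $\lambda\in C([0,T];\R^m)^*$ satisfying the abstract optimality system \eqref{seq:kktstate}--\eqref{seq:kktcomp}. The state equations in the theorem are simply the rewriting of $e(y^*,u^*)=0$ together with the very weak Poisson equation \eqref{eq:vweakpoisson} and the mild formulation of Maxwell's equations from Definition~\ref{def:mildsol}, so no further work is needed there.

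Next I would transform the variational adjoint equation \eqref{seq:kktad}. Inserting the derivative formulas from Lemma~\ref{lem:deriv}, the only nontrivial step is to handle the nonlocal Lorentz-force contribution
\[
 I_L(\pi,y^*,\phi)=\int_0^T \pi(t)\cdot\int_\Omega \varphi(x-r^*(t))\,\bigl(\partial_y F_L(r^*,p^*)\phi\bigr)(t)\,dx\,dt.
\]
Applying Fubini's theorem to the convolutions involving $\EE(t-\tau)$ and $\BB(t-\tau)$ exchanges the order of integration and transfers the semigroup to its adjoint, producing the kernel $\int_t^T \GG(\tau-t)^*\kappa(r^*,p^*,\pi)(\tau)\,d\tau$. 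Defining $(\Phi,\Psi)$ by this integral identifies it, via Stone's theorem applied to $-i\AA$, as the mild solution of the backward Cauchy problem \eqref{eq:admax} with generator $\AA^*$. After collecting all terms one obtains the representation $A(y^*,u^*,\omega)$ of \eqref{eq:Adef} and the variational adjoint identity \eqref{eq:admitmass}. From here, Lemma~\ref{lem:nbv} converts the distributional equation into the pointwise ODE \eqref{eq:rhoode}, provides the boundary-of-interval conditions by testing with $\phi$ that do not vanish at $t=T$, yields the jump formula \eqref{eq:rhojump}, and gives the monotonicity and complementarity for $\mu\in\mathrm{NBV}([0,T];\R^m)$.

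Finally I would exploit the gradient equation \eqref{seq:kktgrad}. Inserting $\partial_u e(y^*,u^*)^*\omega$ from Lemma~\ref{lem:deriv} and using the self-adjointness $((-\Delta^*)^{-1})^* = (-\Delta)^{-1}$ on $\HH$ rewrites the volume contribution as $R^*(-\Delta)^{-1}$ applied to $(\nabla\varphi(\cdot-r^*(t))\times\beta(p^*(t)))\cdot\pi(t)$. Defining the adjoint Poisson state $\chi(t)=-\Delta^{-1}[(\nabla\varphi(\cdot-r^*(t))\times\beta(p^*(t)))\cdot\pi(t)]\in\HH$ and noting $R^*=-\partial_n:\HH\to L^2(\Gamma)$, which is the integration-by-parts identity hidden in the definition of $R$, yields a single integral over $\Gamma$ tested against arbitrary $\phi_u\in L^2(\Gamma)$. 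The fundamental lemma of the calculus of variations then gives the pointwise gradient equation on $\Gamma$. The regularity $\chi\in W^{1,\infty}(]0,T[;\HH)$ is inherited from $\pi\in W^{1,\infty}(]0,T[;\R^3)$ through the linear boundedness of $-\Delta^{-1}$ and smoothness of $\varphi$ and $\beta$.

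The main obstacle is the transfer of the Lorentz-force double convolution into the adjoint Maxwell semigroup via Fubini and Stone, combined with the delicate BV/NBV analysis for the state-constraint multiplier. The latter, which is the only place where the low regularity of $\lambda$ as a measure plays a role, is isolated in Lemma~\ref{lem:nbv} (proved in Appendix~\ref{app:nbv}); all remaining manipulations are bookkeeping of the formulas already collected in Lemma~\ref{lem:deriv} and equation~\eqref{eq:Adef}.
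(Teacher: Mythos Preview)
Your proposal is correct and follows essentially the same approach as the paper: apply the abstract KKT theorem from \cite[Section 1.7.3.4]{HinzePinnauUlbrichUlbrich09} using Lemma~\ref{lm:existenceredsys} and Assumption~\ref{assu:slater}, then post-process \eqref{seq:kktad} via Lemma~\ref{lem:deriv}, Fubini, and Stone's theorem to extract the adjoint Maxwell system and the operator $A$ in \eqref{eq:Adef}, invoke Lemma~\ref{lem:nbv} for the adjoint ODEs, terminal and jump conditions, and complementarity, and finally treat \eqref{seq:kktgrad} by introducing $\chi = -\Delta^{-1}[\ldots]$ with $R^* = -\partial_n$ and applying the fundamental lemma. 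The paper proceeds in exactly this order with the same ingredients.
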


\begin{remark}
As a function of bounded variation, $\mu$ can be decomposed as
\begin{equation*}
 \mu = \mu_a + \mu_d + \mu_s,
\end{equation*}
where $\mu_a \in \mathrm{AC}([0,T];\R^m)$ is absolutely continuous and $\mu_d \in L^\infty(]0,T[;\R^m)$ is a step function 
covering the discontinuities of $\mu$. Moreover, $\mu_s\in C([0,T];\R^m)$ is the singular part, which is non-constant and
whose derivative vanishes almost everywhere. 
Consequently, $\dot\mu$ in \eqref{eq:rhoode2} can be replaced by $\dot\mu_a$, while \eqref{eq:rhojump2} holds also 
with $\mu_d$ instead of $\mu$.
\end{remark}

\begin{remark}
 By integration by parts one can formally derive a strong formulation of the adjoint Maxwell equations in Theorem \ref{thm:kkt}:
 \begin{align}
		-&\frac{\partial}{\partial t}\Phi(x,t) + \curl \Psi(x,t) = \varphi(x-r^*(t)) \pi(t) & &\text{in} \ \Omega \times [0,T] \label{eq:admaxa}\\
		-&\frac{\partial}{\partial t}\Psi(x,t) - \curl \Phi(x,t) = \pi(t) \times \varphi(x-r^*(t))\beta(p^*(t))& &\text{in} \ \Omega \times [0,T]\label{eq:admaxb}\\
		&\dive \Big(\frac{\partial}{\partial t} \Phi(x,t)\Big) = - \dive \big(\varphi(x-r^*(t)) \pi(t)\big)& &\text{in} \ \Omega \times [0,T]\label{eq:div1}\\
		&\dive\Big(\frac{\partial}{\partial t} \Psi(x,t)\Big)= -\dive \big(\pi(t) \times \varphi(x-r^*(t)) 
		\beta(p^*(t))\big)& &\text{in} \ \Omega \times [0,T]\label{eq:div2}\\
		&\Phi(\varsigma,t) \times n = 0, \quad \frac{\partial}{\partial t} \Psi(\varsigma,t) \cdot n = -\pi(t) \times \varphi(\varsigma-r^*(t)\beta(p^*(t)))\cdot n  & &\text{in} \ \Gamma \times [0,T]\label{eq:adbound}\\
		&\Phi(x,T)= 0, \quad \Psi(x,T) = 0 & &\text{in} \ \Omega.
	\end{align}
 Note that the right hand side in \eqref{eq:admaxa}--\eqref{eq:admaxb} does, in general, not satisfy a conservation of charge, 
 which gives rise to non-standard equations in \eqref{eq:div1} and \eqref{eq:div2} and the unusual boundary condition in \eqref{eq:adbound}.
\end{remark}

\section{Numerical investigations}\label{sec:diskret}

In the following we illustrate by means of a representative example 
that the optimal control problem \eqref{eq:optconexact} can be treated numerically. 
We  follow the analytical approach and use the reduced state system of Definition \ref{def:compstate} for our 
numerical investigations. After a brief description of the numerical method we will present some exemplary results.

\subsection{Discretization of the state system}

We start the description of the numerical method with the discretization of the state system. 
Inspired from the analytical treatment of Maxwell's equations by means of semigroup theory, we approximate 
the solution of Maxwell's equations with the help of their fundamental solution, i.e., the semigroup arising if 
$\Omega = \R^3$. We thus neglect the influence of any boundary conditions. 
In case of a single point charge, i.e., charge and current as in \eqref{eq:charge} and \eqref{eq:currentcoupl},
this fundamental solution allows an explicit representation of the arising electromagnetic fields, 
the so called Li\'enard-Wiechert fields, cf.\ e.g.\ \cite{Jackson1999, Spohn2004}:
\begin{align}
 &\begin{aligned}
  E(x, t) &= \frac{q}{4 \pi \epsilon}\,\frac{\left( 1-|\beta(p(t_{ret}))|_2^2\right)}{|R_{v}(t_{ret},p(t_{ret}))|_2^{3}}\,
  R_{v}(t_{ret},p(t_{ret})) \\
  &+ \frac{q}{4 \pi \epsilon c^2 |R_{v}(t_{ret},p(t_{ret}))|_2^3}\, R(t_{ret}) \times 
  \left(R_{v}(t_{ret},p(t_{ret}))  \times \dot {v}(p(t_{ret}))\right) \\
 \end{aligned}\label{eq:lienardE}\\
 & B(x, t) = c\, \epsilon\, \mu\, \frac{R(t_{ret})}{|R(t_{ret})|_2}  \times E(x,t)
\end{align}
with
\begin{align*}
 R(t) &:= x - r(t), \quad t_{ret} := t_{ret}(x,t) = t - \frac{R(t_{ret})}{c},\\
 R_{v}(t,p)&:=\left(R(t)-\beta(p) R(t) \right).
\end{align*}
For the numerical realization these expressions are further simplified. Firstly, we neglect the difference between 
$t$ and $t_{ret}$. Moreover, we leave out the terms arising from an acceleration of the charge, i.e., the second addend on the 
right hand side of \eqref{eq:lienardE}. In contrast to the first addend which is of order $1/R^2$, this 
term grows with $1/R$ and thus models the far field, whose influence on the movement of the particles can be neglected, 
see \cite{SpohnKomech2000}.

The Poisson equation in \eqref{eq:vweakpoisson} is discretized by means of finite elements. 
We use a uniform hexahedral mesh and piecewise trilinear and continuous 
ansatz functions for both, solution and test function, which represents a variational crime 
due to the low regularity of the very weak solution. A priori error analysis for this procedure can be found in \cite{Berggren2004}. 
The linear system of equations arising by this discretization is solved by the CG method preconditioned via an incomplete 
LU decomposition of the stiffness matrix.

Finally, the relativistic Newton-Lorentz equations \eqref{eq:nlorentzpInt}
are solved numerically by the so called Boris scheme, a second-order 
time stepping scheme especially tailored to this type of equations of motions, described in \cite{Boris1970, birdsall2004plasma}. 
It is frequently used in plasma physics
and especially for particle accelerators (as part of particle-in-cell methods), since it is an explicit and energy conserving scheme.
The physical quantities and constants involved in \eqref{eq:nlorentzpInt}  differ by several orders of magnitude, 
cf.\ Table \ref{konstanten} below. In order to avoid numerical cancellation effects, we introduce a nondimensionalization 
factor in the Newton-Lorentz equations.
In addition, cancellation also occurs in the numerical evaluation of the integrals involving $\varphi$ in \eqref{eq:newtonlorentz}. 
This is due to the small support of $\varphi$, whose diameter amounts $10^{-6}$ 
and causes larger slopes of $\varphi$ due to the normalization in \eqref{eq:smearout}. 
To circumvent these problems, we use a linear transformation to enlarge the support.
The transformed integrals are approximated by the Simpson rule weighted with $\varphi(. - r)$ 
and $\nabla \varphi(. - r)\times \beta(p)$, respectively.

\subsection{Optimization algorithm}

To keep the model physically meaningful it is of major importance to fulfill the pointwise state constraint in 
\eqref{eq:stateconst}, see Remark \ref{rem:stateconst}. This is guaranteed by a purely primal interior point approach 
in form of a $\log$-barrier method, see e.g.\ \cite[Chapter 19]{nocedal2006}. In \cite{Schiela2009, Schiela2013} this method has been proven 
to work in function space for one dimensional problems, i.e., problems involving ODEs as in our case.
The reduction of the homotopy parameter associated with the primal interior point method follows 
an update strategy by \cite[Section 19.3]{nocedal2006}.

For the optimization algorithm we reduce the optimal control problem to an optimization problem in the control variable 
$u$ only, which is justified by Theorem \ref{thm:stateex}. The major advantage of this procedure is a significant 
reduction of the number of optimization variables, since the control $u$ is only one dimensional. It does not depend on time, 
and has its support on $\Gamma$ instead of the whole domain $\Omega$. The dimension of the optimization problem reduced to 
the control variable, thus, amounts to the number of nodes on the boundary only. This allows to employ optimization methods, which 
require large memory demand like the BFGS method, see e.g. \cite[Section 6.1]{nocedal2006}. Thanks to the reduction of the dimension the 
BFGS method can be run for a moderate number of degrees of freedom on a computer with 4GB RAM without any limited memory modification.
In order to globalize the method, we perform a curvature test to switch from the BFGS direction to the negative gradient
of the objective, if necessary, and apply a line-search according to the Armijo rule.

As a consequence of this reduction approach the mapping $u \mapsto \JJ(r(u),u)$ as well as its derivative have to be 
evaluated in every iteration of the optimization algorithm. Here $r(u)$ denotes the $r$-component of the
solution of state system associated with $u$.  
The derivative of $u \mapsto \JJ(r(u),u)$ is computed numerically by means of the automatic differentiation tool ADiMat \cite{Bischof2002}. 
As the number of control variables is much higher than the number of output variables, which is just a real number, 
we use the reverse mode. Moreover, we exclude the linear parts of the solution mapping of the state system from automatic 
differentiation to differentiate them by hand. This especially concerns the iterative solver of Poisson's equation.
To summarize we thus follow a first-discretize-then optimize approach. It is not clear whether the discrete adjoint 
equation arising in this way can be interpreted as a suitable discretization of the adjoint system in Theorem \ref{thm:kkt}. 
In particular, the adjoint Boris scheme gives rise to future research with regard to its stability and consistency.

\subsection{Test setting}

For the numerical realization we chose an electron as particle. The mass at rest and the charge are chosen appropriately, see 
Table \ref{konstanten}. 
The computational domain $\Omega$ is a cube of size length $2\cdot 10^{-3}$~m. 
For the subdomain $\tilde\Omega$ arising in the state constraint \eqref{eq:stateconst} we chose an inner cube of size length 
$2\cdot 10^{-4}$~m.
As the electron is almost moving with the speed of light, the end time was set to $T = 2 \cdot10^{-10}$~s.

\begin{table}[h!]
	\begin{center}
		\begin{tabular}{lcc}
			\toprule
			Quantity & Symbol & Value (in SI units)\\
			\midrule
			speed of light (in vacuum) & $c$ &  $2. 9979 \cdot 10^8 \  \textup{m s}^{-1}$\\
			permittivity of free space & $\epsilon$ &  $8.8541 \cdot  10^{-12}\ \textup{F m}^{-1}$\\
			permeability of free space & $\mu$ & $4 \pi \cdot 10^{-7}\  \textup{H m}^{-1}$\\
			electron rest mass & $m_0^q$ & $9.1093 \cdot 10^{-31}\ \textup{kg}$\\
			electric charge & $q$ & $1.6021 \cdot 10^{-19}\ \textup{C}$\\
			\bottomrule
		\end{tabular}
		\caption{Physical constants.}\label{konstanten}
	\end{center}
\end{table}

For the numerical computations we focus on optimizing the particle position at end time, i.e., we choose 
\begin{equation*}
 J_1(r) \equiv 0, \quad J_2(r) = \frac{1}{2}\, |r - r_d|_2^2
\end{equation*}
for the contributions to the objective in \eqref{eq:tildeP} and \eqref{eq:optconexact}, respectively.
Furthermore, the Tikhonov parameter $\alpha$ in the objective is set to $\alpha = 10^{-9}$ 
to compensate for the comparatively large values of the control. 
Consequently we are mainly interested in steering the particle beam at a given end time to a fixed position $r_d$. 
As a stopping criterion for the overall algorithm we check if the relative error 
between the desired particle position $r_d$ and the computed one is below a given tolerance.

For the computations presented in the following section, we used an equidistant mesh with 17,576 nodes. 
This amounts to 7,504 nodes on the boundary, i.e., the number of unknown control variables, which corresponds to the dimension of the 
optimization problem. For the numerical integration of the ODE we used an equidistant time step of $10^{-12}$~s.

\subsection{Numerical results}\label{sec:numerics}

The particle trajectories for selected iterations of the optimization 
algorithms are shown in Figures \ref{traj1} to \ref{traj47}. 
While the particle is colored in black, we marked the desired end position in the upper left corner in grey.
It is to be noted that the control $u$ only influences the magnetic field, 
which in turn cannot slow down or accelerate this particle beam since its contribution to the Lorentz force only acts perpendicular 
to the direction of motion, cf.\ \eqref{eq:nlorentza}. This causes spiral shaped trajectories such as the ones depicted in figures \ref{traj0} to \ref{traj47}.
The desired end position has been reached after 47 iterations of the optimization algorithm with an accuracy of $3.5\cdot 10^{-8}$~m.

\begin{figure}[h]
 	\centering
  	\begin{minipage}[b]{6cm}
		\includegraphics[width=6cm]{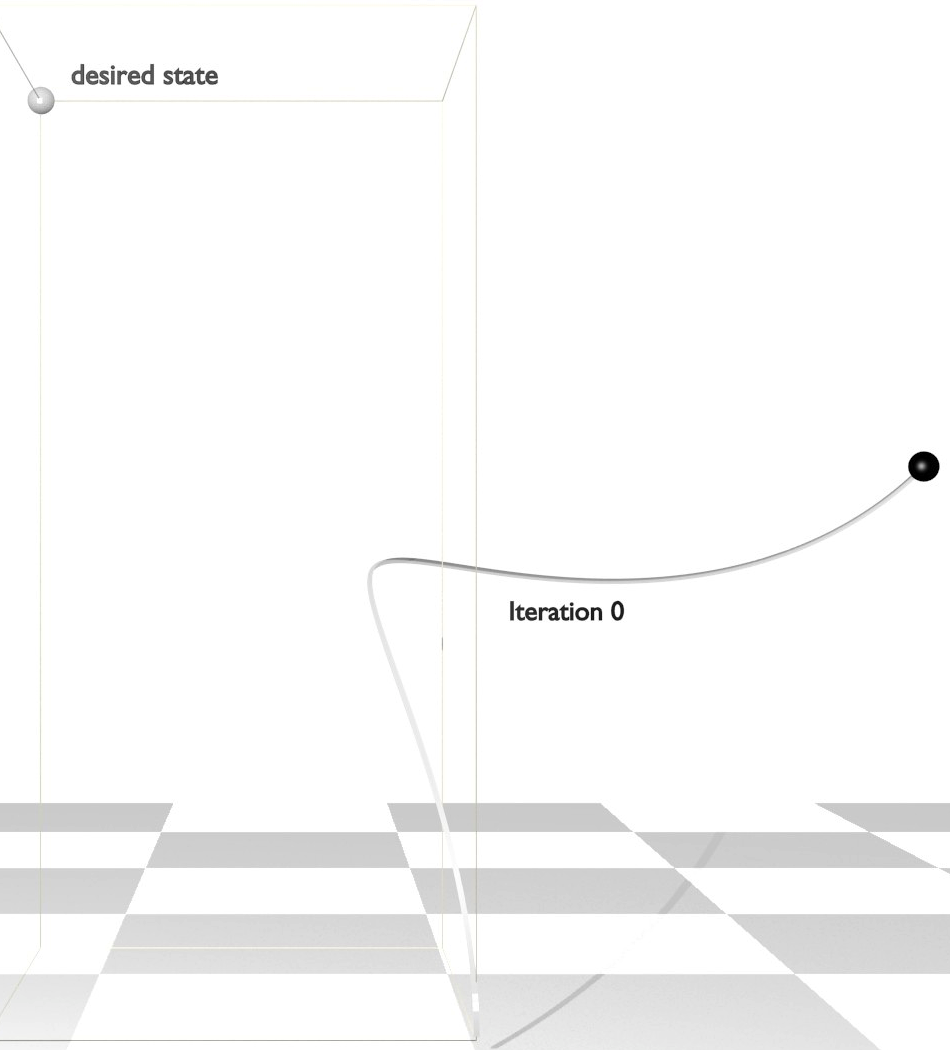}
		\captionsetup{font=small} 
    		\captionsetup[figure]{labelfont=small} 
    		\caption{Particle trajectory in iteration 0.}
		\label{traj0}
	\end{minipage}
	\begin{minipage}[b]{6cm}
		\includegraphics[width=6cm]{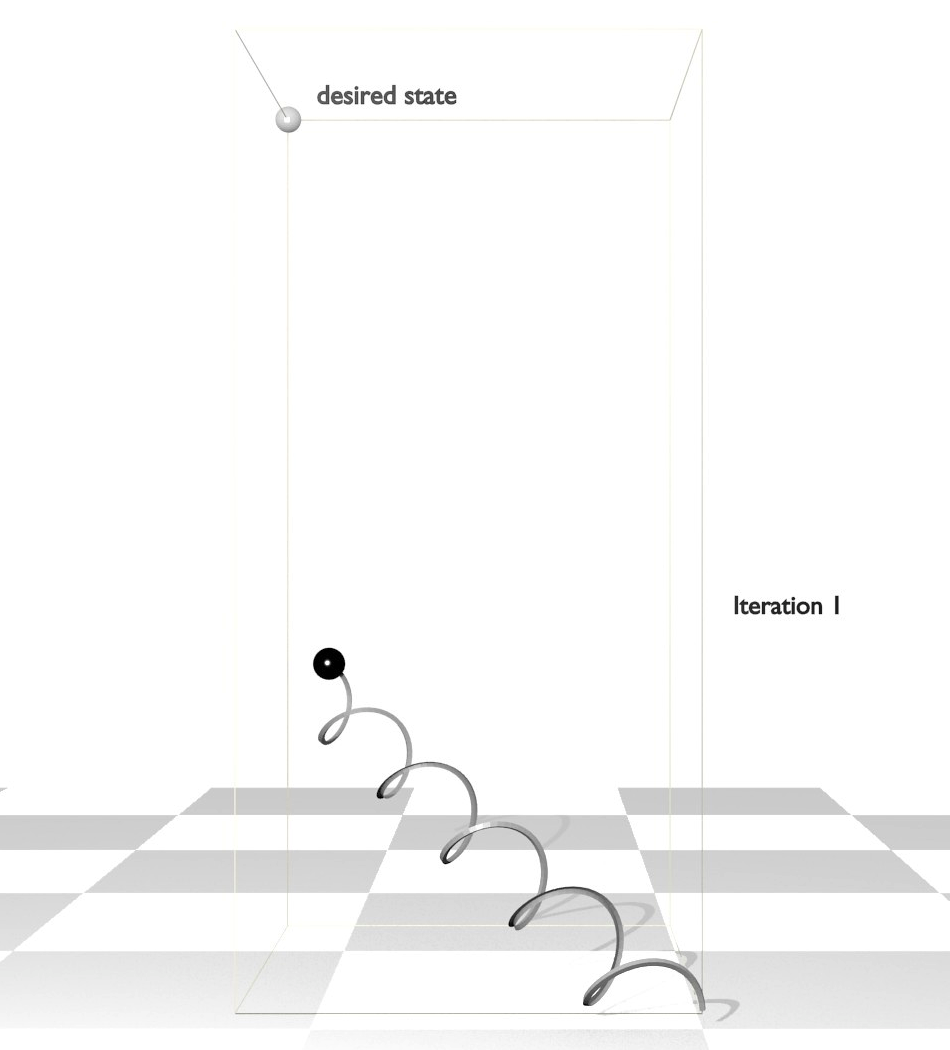}
		\captionsetup{font=small} 
    		\captionsetup[figure]{labelfont=small} 
    		\caption{Particle trajectory in iteration 1.}
		\label{traj1}
	\end{minipage}
\end{figure}
\begin{figure}[h]
 	\centering
  	\begin{minipage}[b]{6cm}
		\includegraphics[width=6cm]{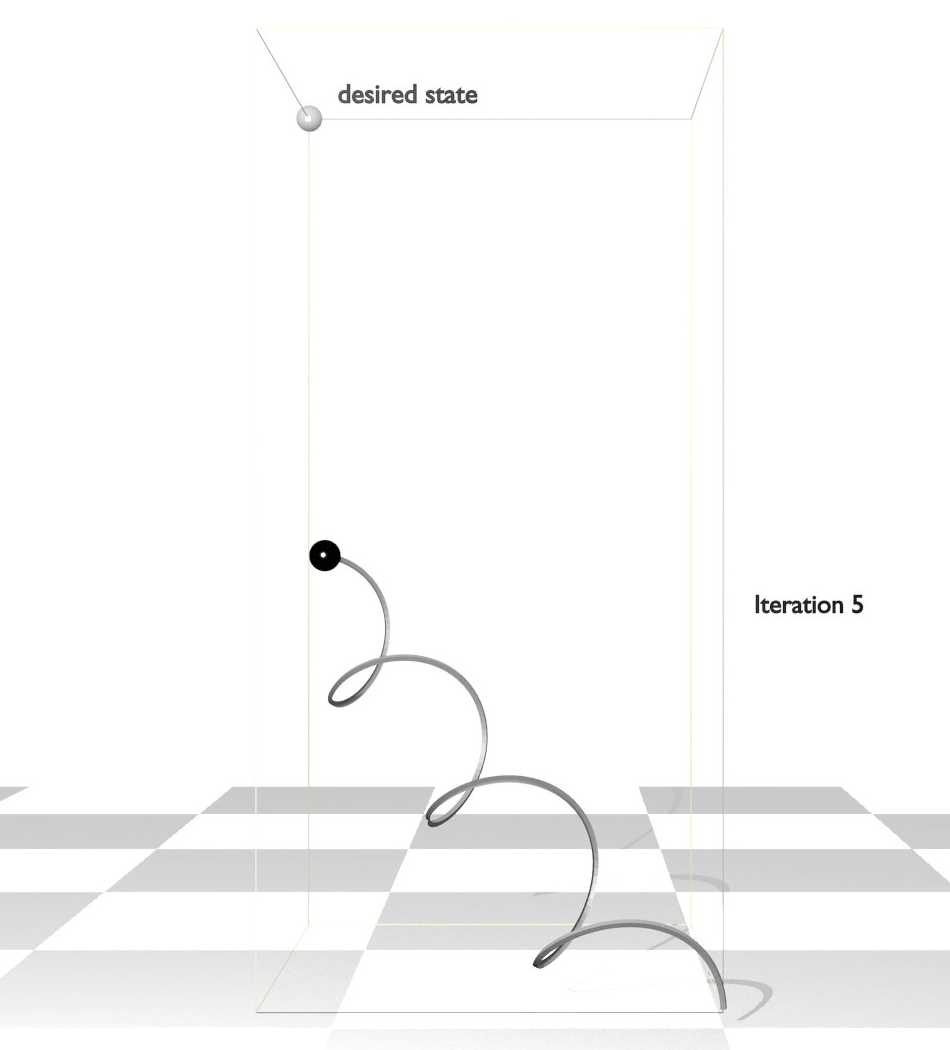}
		\captionsetup{font=small} 
    		\captionsetup[figure]{labelfont=small} 
    		\caption{Particle trajectory in iteration 5.}
		\label{traj5}
	\end{minipage}
	\begin{minipage}[b]{6cm}
		\includegraphics[width=6cm]{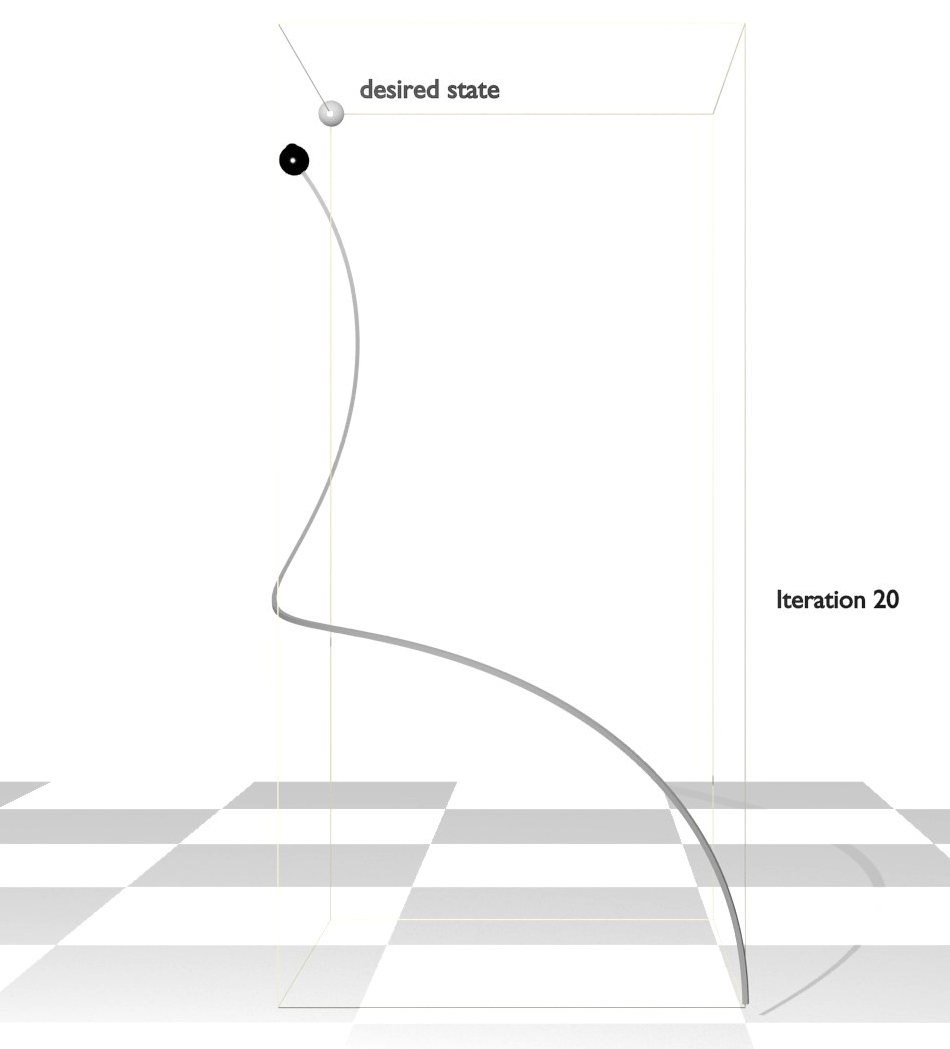}
		\captionsetup{font=small} 
    		\captionsetup[figure]{labelfont=small} 
    		\caption{Particle trajectory in iteration 20.}
		\label{traj20}
	\end{minipage}
\end{figure}
\begin{figure}[h]
 	\centering
  	\begin{minipage}[b]{6cm}
		\includegraphics[width=6cm]{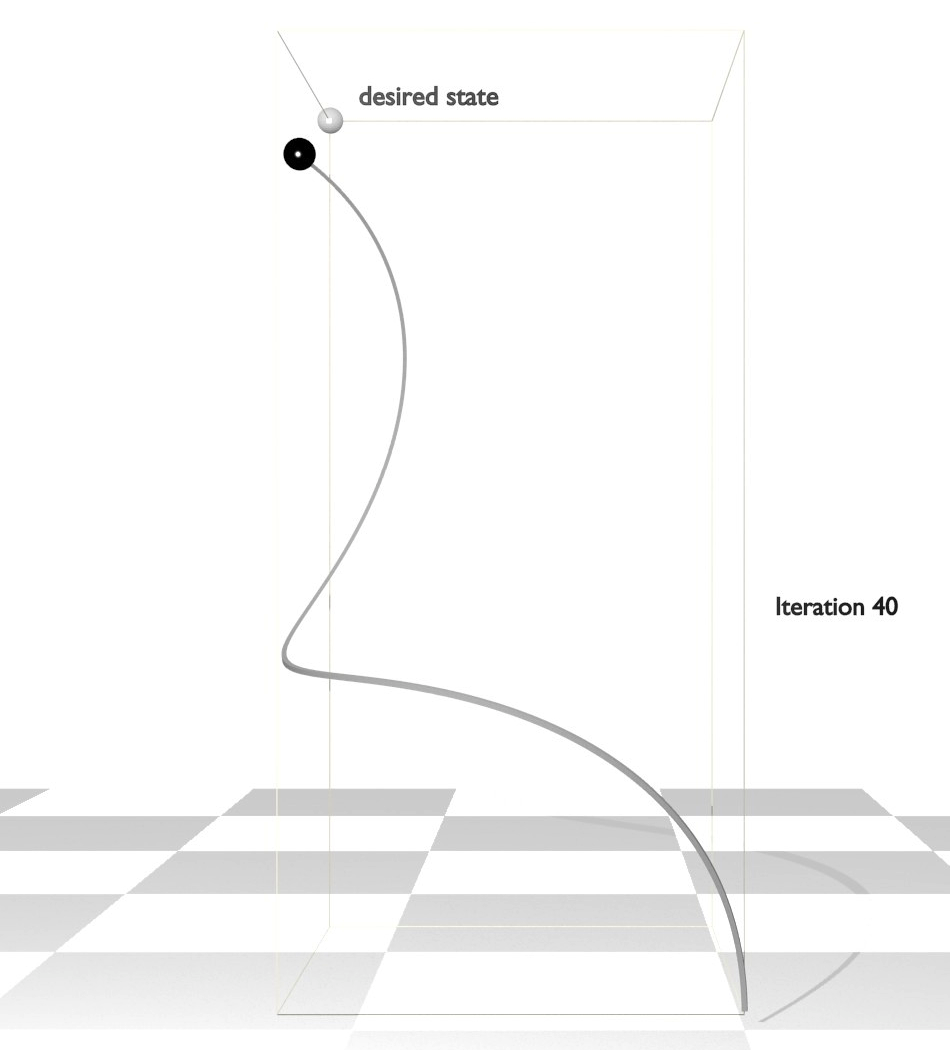}
		\captionsetup{font=small} 
    		\captionsetup[figure]{labelfont=small} 
    		\caption{Particle trajectory in iteration 40.}
		\label{traj40}
	\end{minipage}
	 \begin{minipage}[b]{6cm}
		\includegraphics[width=6cm]{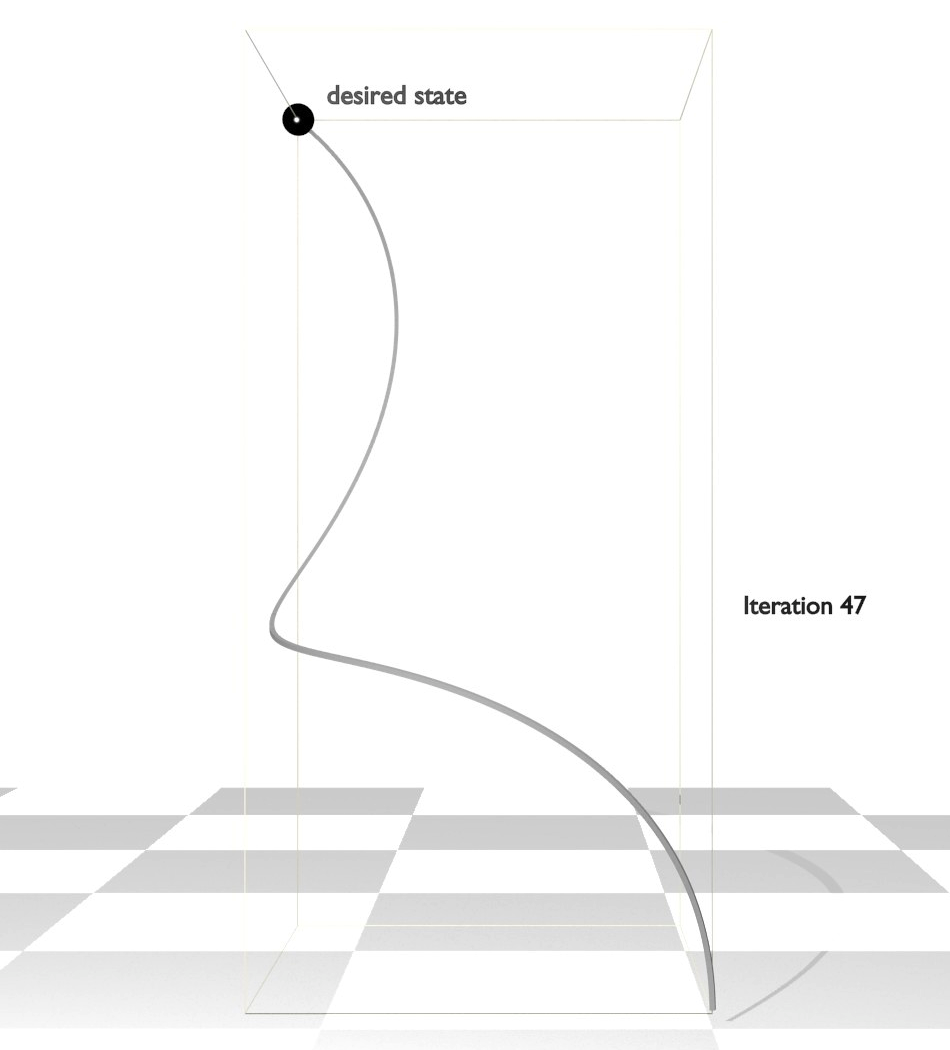}
		\captionsetup{font=small} 
    		\captionsetup[figure]{labelfont=small} 
    		\caption{Particle trajectory in iteration 47.}
		\label{traj47}
	\end{minipage}
\end{figure}


The optimal external magnetic field on the boundary of $\Omega$ generated by the optimal control $u^*$ is shown in Figures \ref{maga} and \ref{magb}. 
\begin{figure}[h]
  \centering
  \begin{minipage}[b]{6cm}
    \includegraphics[width=6cm]{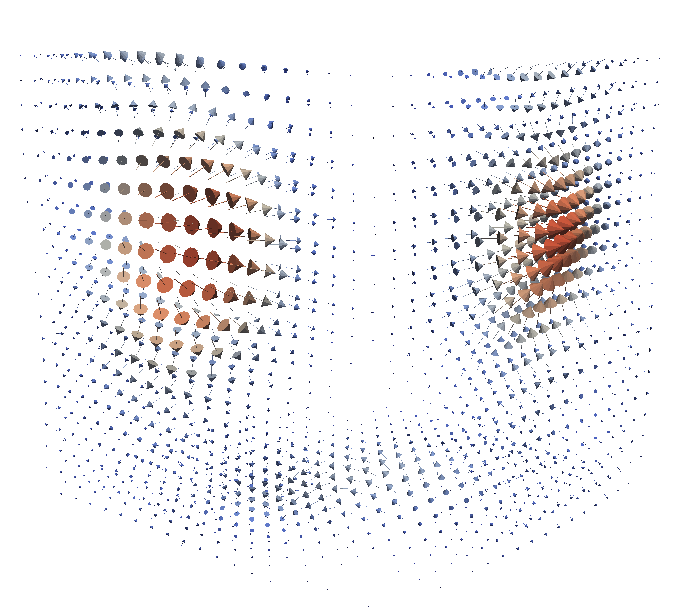}
    \captionsetup{font=small} 
    \captionsetup[figure]{labelfont=small} 
    \caption{Front view of  external magnetic field.}
    \label{maga}
  \end{minipage}
  \begin{minipage}[b]{6cm}
    \includegraphics[width=6cm]{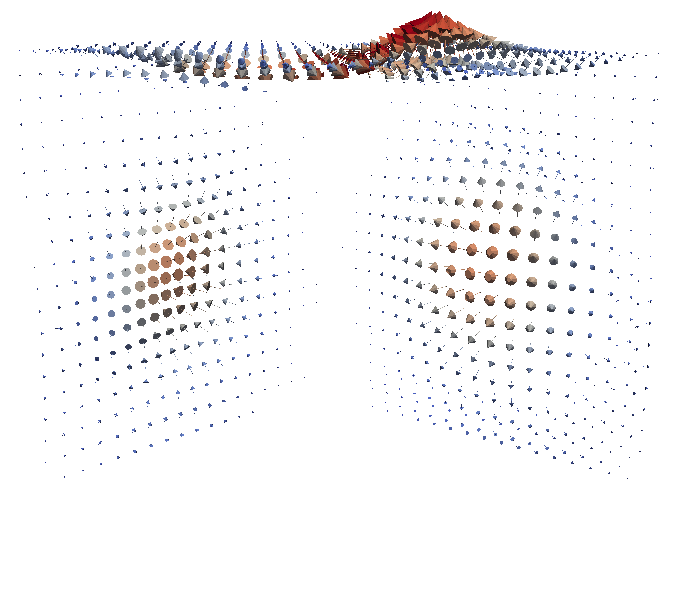} 
    \captionsetup{font=small} 
    \captionsetup[figure]{labelfont=small}  
    \caption{Back view of external magnetic field.}
    \label{magb}
  \end{minipage}
\end{figure}



Table \ref{tabtest1a} shows the convergence history of the globalized BFGS interior point method. 
Beside the objective value and Euclidean norm of the gradient, Table \ref{tabtest1a} shows the used descent direction for selected iterations, 
where ``BFGS'' refers to the BFGS direction and ``Grad'' is the negative gradient.
\begin{table}[h]
\begin{center}
\begin{tabular}{lrrr|lrrr}
\toprule
Iteration & f-value & gradient & step & Iteration & f-value & gradient & step\\
\midrule
0 & 0.3835 & - & - & 35 & 0.0060& 1.7E-5& BFGS\\
1 & 0.1509 & 0.0024& Grad & 40 & 1.6E-5& 1.7E-5& Grad  \\ 
5 & 0.0748& 6.0E-4& BFGS & 42 &9.9E-8& 6.0E-6& BFGS  \\
10 & 0.0091& 2.6E-4& BFGS  & 44& 4.0E-8& 2.7E-7& BFGS  \\
20 & 0.0086& 2.3E-4& Grad  & 46 & 2.5E-9& 1.4E-8& BFGS  \\
25 & 0.0064& 6.0E-5& BFGS  & 47 & 6.1E-10& 5.4E-9& BFGS  \\
30 &0.0061& 3.9E-5& BFGS  \\
\bottomrule
\end{tabular}
\captionsetup{font=small} 
 \captionsetup[figure]{labelfont=small}  
\caption{Convergence history of the optimization algorithm.}\label{tabtest1a}
\end{center}
\end{table}

\begin{appendix}

\section{Proof of Theorem \ref{thm:stateex}}\label{app:stateex}

 Clearly, $y$ solves \eqref{eq:nlorentzsred} if and only if it is a fixed point of 
 \begin{equation*}
  G: C([0,T];\R^3)^2 \to C([0,T];\R^3)^2,\quad 
  G(y)(t):= \int_0^t f(y)(\tau) d\tau.
 \end{equation*}  
 We show that $G$ is contractive, if we equip the set of continuous functions with the following equivalent norm
 \begin{equation*}
  \|y\|_{G}:= \max_{t\in [0,T]} e^{-Lt} |y(t)|_2.
 \end{equation*}
 To this end, observe that for every $v\in C([0,t];\R^3)^2$ and every $\tau\in [0,T]$ there holds
 \begin{equation*}
  \|v\|_{C([0,\tau];\R^3)^2} \leq \max_{s\in [0,\tau]} e^{Ls}\,\max_{s\in [0,\tau]} e^{-Ls} |v(s)|_2 \leq e^{L\tau}\,\|v\|_G.
 \end{equation*}
 Then we obtain by means of Lemma \ref{lm:Lipred}
 \begin{equation*}
 \begin{aligned}
  \|G(y) - G(v)\|_G 
  &\leq \max_{t\in [0,T]} e^{-Lt} \int_0^t |f(y)(\tau) - f(v)(\tau)| d\tau\\
  &\leq L \max_{t\in [0,T]} e^{-Lt} \int_0^t \|y - v\|_{C([0,\tau];\R^3)^2} d\tau\\
  &\leq L\,\max_{t\in [0,T]} e^{-Lt} \int_0^t e^{L\tau}\,d\tau\,\|y - v\|_G \leq \big(1- e^{-LT}\big)\|y - v\|_G,
 \end{aligned}
 \end{equation*}
 i.e., the desired contractivity of $G$. Thus Banach's fixed point theorem gives the existence of a unique solution to 
 \eqref{eq:nlorentzsred} as claimed.

 To prove the a priori estimate we again abbreviate $(r,p) := y + (r_0, p_0)$. Then \eqref{eq:sigmaest} implies
 for an arbitrary $t\in [0,T]$ that
 \begin{equation*}
  |f_2(y)(t)| \leq c.
 \end{equation*}  
 Beside \eqref{eq:phiest}, the conditions on $\varphi$ in \eqref{eq:smearout} clearly give that for every $r\in \R^3$
 \begin{equation*}
  \|\nabla \varphi(x-r)\|_X \leq \sqrt{|\Omega|}\max_{x\in \R^3} |\nabla\varphi(x)| < \infty,
  \quad \|\varphi(x-r)\|_{L^2(\Gamma)} \leq C_\varphi\, \sqrt{|\Gamma|} < \infty.
 \end{equation*}  
 Thus, \eqref{eq:semiexp}, \eqref{eq:sigmaest}, \eqref{eq:phiest}, and the definition of $\beta$ in \eqref{eq:sigmac} give
 \begin{equation*}
 \begin{aligned}
  |f_1(y)(t)| &\leq q \Big(\|\varphi(.-r(t))\|_{L^2(\Omega)} \|F_L(r,p)(t)\|_X \\
  &\qquad + \|(-\Delta^*)^{-1}R\|_{\LL(L^2(\Gamma),L^2(\Omega))}\|u\|_{L^2(\Gamma)} \|\nabla \varphi(.-r(t)\|_{L^2(\Omega)} 
  |\beta(p(t))|_2\\
  &\qquad + \|u\|_{L^2(\Gamma)} \|\varphi(.-r(t))\|_{L^2(\Gamma)} |\beta(p(t))|_2\Big)\\
  &\leq C_1 \,\|u\|_{L^2(\Gamma)} + C\, \|F_L(r,p)(t)\|_X.
 \end{aligned}
 \end{equation*}
 In view of $|\beta(p)|_2 \leq 1$ for all $p\in \R^3$, cf.\ again \eqref{eq:sigmac} and \eqref{eq:sigmaest}, $F_L$ can be estimated by
 \begin{equation*}
  \|F_L(r,p)(t)\|_X \leq 2 M e^{\omega T} \Big( \|(E_0, B_0)\|_{X\times X} + \|j(r,p)\|_{L^1([0,T];X)}\Big)
 \end{equation*}
 with 
 \begin{equation*}
  \|j(r,p)\|_{L^1([0,T];X)} \leq q\,T\,\sqrt{C_\varphi}\,c,
 \end{equation*}
 see \eqref{eq:jest}. Therefore, we arrive at
 \begin{equation*}
  |\dot y(t)| = |f(y)(t)| \leq C_1 \,\|u\|_{L^2(\Gamma)} + C_2 \quad\forall\, t\in [0,T]
 \end{equation*}
 with constants $C_1, C_2 > 0$ independent of $t$, $u$, and $y$. As $y(0) = 0$, this gives the desired estimate.

\section{Proof of Lemma \ref{lem:nbv}}\label{app:nbv}

By the Riesz representation theorem there exists a unique function $\mu \in \mathrm{NBV}([0,T];\R^m)$ such that
\begin{multline}\label{eq:rieszbv}
 \dual{\lambda}{g'(r^*)\phi_r}_{C([0,T];\R^m)^*, C([0,T];\R^m)} 
 = \int_0^T \big(g'(r^*(t))\phi_r(t)\big) \cdot d\mu(t) \\ \forall \,\phi_r\in C([0,T];\R^3).
\end{multline}
Moreover, $\lambda \geq 0$ implies that $\mu$ is monotonically increasing as claimed. 
Taking the definition of $\JJ$ into account we find
\begin{equation*}
 \sdual{\ddp{\JJ}{r}(r^*, u^*)}{\phi_r}_{Y^*,Y}
 = \int_0^T J_1'(r^*(t))\phi_r(t)\,dt + J_2'(r^*(T))\phi_r(T).
\end{equation*}
By inserting this together with \eqref{eq:rieszbv} in \eqref{eq:admitmass} we arrive at
\begin{multline*}
 \int_0^T \dot\phi(t)\cdot \omega(t)\,dt 
 + \int_0^T \phi(t) \cdot \Bigg[A(y^*,u^*,\omega)(t) - \vektor{\nabla J_1(r^*(t))}{0}\Bigg] dt\\
 - \int_0^T \big(g'(r^*(t))\phi_r(t)\big) \cdot d\mu(t) = 0
 \quad\forall\, \phi \in C^{\infty}_0([0,T];\R^3)^2.
\end{multline*} 
In view of \eqref{eq:Adef},  the continuity of $B^*$, $E^*$, and $y^*$ w.r.t.\ time and $\omega \in L^2(]0,T[;\R^3)^2$
implies $A(y^*, u^*, \omega) \in L^2(]0,T[;\R^3)^2$.
Thus, according to the Du Bois Raymond theorem for Stieltjes integrals, see e.g. \cite[Lemma 3.1.9]{Gerdts2012},
the equivalence class $\omega$ admits a representation as $\mathrm{BV}$-function, denoted by the same symbol 
for simplicity, which fulfills for all $t\in [0,T]$
\begin{align}
 \varrho(t)
 &= 
 \begin{aligned}[t]
  \varrho(T) &- \int_t^T \big[A_r(y^*, u^*, \omega)(\tau) - \nabla J_1(r^*(\tau))\big] d\tau\\
  &+ \int_t^T g'(r^*(\tau))^\top d\mu(\tau)
 \end{aligned}  \label{eq:rho}\\
 \pi(t) &= \pi(T) -\int_t^T A_p(y^*, u^*, \omega)(\tau)\,d\tau.\label{eq:pi}
\end{align}
The later equation immediately implies \eqref{eq:piode} and, since 
$\pi, \varrho \in \mathrm{BV}([0,T];\R^3) \embed L^\infty(]0,T[;\R^3)$, this ODE gives the desired regularity of $\pi$.

As a function of bounded variation $\mu$ has at most countably many discontinuities and is differentiable almost everywhere in $]0,T[$. 
Moreover, since $\mu$ is in addition monotonically increasing, there holds
\begin{equation*}
 \frac{d}{dt} \int_t^T g'(r^*(\tau))^\top d\mu(\tau) = -g'(r^*(t))^\top \dot\mu(t) \quad \text{f.a.a.\ } t \in ]0,T[
\end{equation*}
see e.g.\ \cite[Lemma 2.1.26]{Gerdts2012}. Thus \eqref{eq:rho} gives \eqref{eq:rhoode}.

Integrating the last integral in \eqref{eq:rho} by parts leads to
\begin{equation*}
\begin{aligned}
 &\varrho(t) + g'(r^*(t))\cdot \mu(t)\\
 &= \varrho(T) + g'(r^*(T)) \cdot \mu(T)\\
 &\quad - \int_t^T \Big[A_r(y^*, u^*, \omega)(\tau) - \nabla J_1(r^*(\tau))+ \sum_{j=1}^m \mu_j(\tau) \, g_j''(r^*(\tau))\dot r^*(\tau)\Big] d\tau.
\end{aligned}
\end{equation*}
As the right hand side is continuous, the discontinuities of $\varrho$ are therefore located at the same points as the ones of $\mu$. 
Moreover, as $\mu$ is of bounded variation, one has
\begin{equation*}
 \int_t^T g'(r^*(\tau))^\top d\mu(\tau) - \lim_{\varepsilon \searrow 0} \int_{t-\varepsilon}^T g'(r^*(\tau))^\top d\mu(\tau) 
 = g'(r^*(t))^\top \big(\lim_{\varepsilon \searrow 0} \mu(t-\varepsilon) - \mu(t)\big)
\end{equation*}
for every $t\in ]0,T]$, cf.\ e.g.\ \cite[p.\ 66]{Gerdts2012}. Since $A_r(y^*, u^*, \omega)(.) - \nabla J_1(r^*(.)) \in L^2(]0,T[;\R^3)$,  
\eqref{eq:rho} therefore implies \eqref{eq:rhojump}.

Integrating the first integral in \eqref{eq:admitmass} by parts yields
\begin{equation}\label{eq:endwert}
\begin{aligned}
 &- \int_0^T \phi(t)\cdot d\omega(t)
 + \int_0^T \phi(t) \cdot \Bigg[A(y^*,u^*,\omega)(t) - \vektor{\nabla J_1(r^*(t))}{0}\Bigg] dt\\
 & - \int_0^T \big(g'(r^*(t))\phi_r(t)\big) \cdot d\mu(t)
 =  J_2'(r^*(T))\phi_r(T)- \phi(T)\cdot\omega(T) \quad \forall\,\phi\in Y.
\end{aligned}
\end{equation}
The continuity of $g'(r^*(\,.\,))$ gives that 
\begin{equation*}
 \nu(t) = \int_t^T g'(r^*(\tau))^\top d\mu(\tau)
\end{equation*}
is of bounded variation. Since $\phi_r$ also continuous, we arrive at 
\begin{equation*}
 \int_0^T \big(g'(r^*(t))\phi_r(t)\big) \cdot d\mu(t)
 = -\int_0^T \phi_r(t)^\top d\nu(t),
\end{equation*}
cf.\ e.g.\ \cite[p.\ 67]{Gerdts2012}. Hence, thanks to \eqref{eq:rho} and \eqref{eq:pi}, 
\eqref{eq:endwert} gives $\omega(T)\cdot \phi(T)  = J_2'(r^*(T))\phi_r(T)$ for all $\phi \in Y$, which in turn 
yields the desired final time conditions in \eqref{eq:piend} and \eqref{eq:rhoend}.

\end{appendix}

 \bibliographystyle{plain}

\end{document}